\pgfplotsset{compat=1.15}
\theoremstyle{plain}
\newtheorem{theorem}{Theorem}[section]
\newtheorem{lemma}[theorem]{Lemma}
\newtheorem{proposition}[theorem]{Proposition}
\theoremstyle{definition}
\newtheorem{example}[theorem]{Example}
\theoremstyle{remark}
\newtheorem{remark}[theorem]{Remark}
\numberwithin{equation}{section}
\definecolor{darkgreen}{rgb}{.1,.5,0}
\theoremstyle{plain}
\theoremstyle{definition}
\newtheorem*{Sketch of proof}{Sketch of proof}
\numberwithin{equation}{section}
\begin{document}
\setcounter{page}{1}
\title[ completion problem for extension of $m$- isometric weighted composition operators ]{completion problem for extension of $m$- isometric weighted composition operators on directed graphs}

\author[ V. Devadas , T.  Prasad,  E. Shine Lal ]{ V. Devadas , T.  Prasad and E. Shine Lal }

\address{V. Devadas \endgraf
		Department of Mathematics, Sree Narayana College , Alathur , 
		Kerala, 
		India -678682}

	\email{\textcolor[rgb]{0.00,0.00,0.84}{ v.devadas.v@gmail.com}}
\address{E. Shine Lal  \endgraf
		Department of Mathematics, University College, Thiruvananthapuram,
		Kerala, 
		India- 695034.}

	\email{\textcolor[rgb]{0.00,0.00,0.84}{ shinelal.e@gmail.com}}

\address{T. Prasad\endgraf
		Department of Mathematics, 
		University of Calicut,
		Kerala-673635, 
		India.}
	
	\email{\textcolor[rgb]{0.00,0.00,0.84}{ prasadvalapil@gmail.com}}

\subjclass[2000]{Primary 47B33; Secondary  47B20,  47B38}
\keywords{ $k$-quasi-$m$- isometric operator,  composition operator, weighted composition operator, conditional expectation, completion problem}

\begin{abstract}
In this paper, we discuss $k$-quasi-$m$-isometric completion problem of unilateral weighted shifts and  composition operators on directed graphs with one circuit and more than one branching vertex.
\end{abstract}
\maketitle
\section{Introduction and Preliminaries}

When a class of bounded linear operators $\mathcal{C}$ and a finite sequence of positive weights are given, an important problem in operator theory is to determine whether this sequence can be extended to the weight sequence of an injective, bounded unilateral weighted shift belonging to the class $\mathcal{C}$. Such an extended sequence is called a completion of the original sequence in the class $\mathcal{C}$. A considerable amount of research in the literature deals with the subnormal completion problem, as well as completion problems for other classes of operators, particularly those related to isometries \cite{curto, jung}.

Let $ B(\mathcal{H}) $ denote the algebra consisting of all bounded linear operators acting on a complex Hilbert space $ \mathcal{H} $. Throughout this work, the symbols $ \mathbb{N}, \mathbb{Z}_{+}, \mathbb{Z}, \mathbb{R}, $ and $ \mathbb{C} $ represent the sets of natural numbers, positive integers, integers, real numbers, and complex numbers, respectively. Let $ \kappa \in \mathbb{N} $, and denote by $ J_{[0,\kappa]}$ the set $ \{0, 1, 2, \ldots, \kappa\} $.

  For an operator $ T \in B(\mathcal{H}) $ and $ m \in \mathbb{Z}_{+} $, we define  
$$\mathcal{B}_{m}(T) = \sum_{j=0}^{m} (-1)^{j} \begin{pmatrix} 	m \\ j 
\end{pmatrix}T^{*(m-j)} T^{(m-j)},$$
where \( T^{*} \) denotes the adjoint of $ T $, and  $ \begin{pmatrix} m \\ j 
\end{pmatrix} $ represents the binomial coefficient.

   For $m\in\mathbb{N}$, an operator  $T\in B(\mathcal{H})$ is said to be  $m$-isometric if $ \mathcal{B}_m(T)=0$. For $k,m \in \mathbb{N}$, an operator $T \in B(H)$ is said to be   $k$-quasi-$m$-isometric if 
$ T^{*k}\mathcal{B}_m(T)T^k=0$.

Let $(X,\mathcal{F},\mu)$ be a discrete measure space, where $X$ is a countably infinite set and $\mu$ is a positive measure on $\mathcal{F}$, the $\sigma$-algebra of all subsets of $X$, such that $\mu(\{x\})>0$ for every $x\in X$.  
A function $\phi : X \to X$ is said to be measurable if $\phi^{-1}\mathcal{F} \subset \mathcal{F}$.  
The measure $\mu \circ \phi^{-1}$ on $\mathcal{F}$ is defined by
$(\mu \circ \phi^{-1})S = \mu(\phi^{-1}S), ~~~~~S \in \mathcal{F}.$

If $\mu \circ \phi^{-1}$ is absolutely continuous with respect to $\mu$, then $\phi$ is called nonsingular. In this case, by the Radon--Nikodym theorem, there exists a Radon--Nikodym derivative of $\mu \circ \phi^{-1}$ with respect to $\mu$, denoted by $h$. Moreover, if $\phi$ is nonsingular, then $\phi^{p}$ is also nonsingular for every $p \in \mathbb{Z}_{+}$. The Radon--Nikodym derivative of $\mu \circ \phi^{-p}$ with respect to $\mu$ is denoted by $h_{p}$. In particular, $h_{0}=1 $ and  $h_{1}=h.$

Let $L^{2}(X,\mathcal{F},\mu)\,(=L^{2}(\mu))$ denote the space of all equivalence classes of square integrable, complex–valued functions on $X$ with respect to the measure $\mu$. For a nonsingular measurable transformation $\phi:X\rightarrow X$, the associated composition operator $C_\phi : L^{2}(\mu) \to L^{2}(\mu)$
is defined by $C_\phi f = f \circ \phi,~~f \in L^{2}(\mu)$. The operator $C_\phi$ is bounded if and only if the Radon--Nikodym derivative $h$ is essentially bounded.  
In this case, $\|C_\phi\|^{2} = \|h\|_{\infty}$, and for each $f \in L^{2}(\mu)$ and $n \in \mathbb{Z}_{+}$,
$$
\|C_\phi^{\,n} f\|^{2} = \int_{X} h_{n}\, |f|^{2}\, d\mu.
$$

Let $L^{\infty}(\mu)$ denote the space of equivalence classes of essentially bounded, measurable, complex–valued functions on $X$.  
For $\pi \in L^{\infty}(\mu)$, the multiplication operator                              $ M_\pi : L^{2}(\mu) \to L^{2}(\mu)$ is given by
$ M_\pi f = \pi f,~ f \in L^{2}(\mu)$. If $\phi$ is a nonsingular measurable transformation on $X$ and $\pi \in L^{\infty}(\mu)$, then the weighted composition operator $ W : L^{2}(\mu) \to L^{2}(\mu)$ induced by $\phi$ and $\pi$ is defined by
$Wf = \pi \,(f \circ \phi), ~f \in L^{2}(\mu)$. For each $k \in \mathbb{Z}_{+}$, define
$\pi_{k} = \pi \, (\pi \circ \phi)\, (\pi \circ \phi^{2}) \cdots (\pi \circ \phi^{k-1})$. Then the $k$-th iterate of $W$ satisfies
$W^{k} f = \pi_{k}\, (f \circ \phi^{k}),~ f \in L^{2}(\mu)$.General properties of composition operators may be found in the standard literature.

If $\phi$ is a measurable transformation, then $\phi^{-1}\mathcal{F}$ forms a $\sigma$-subalgebra of $\mathcal{F}$, and the space $L^{2}(X,\phi^{-1}\mathcal{F},\mu)$
is a closed subspace of the Hilbert space $L^{2}(X,\mathcal{F},\mu) = L^{2}(\mu)$.  
The conditional expectation operator
$E : L^{2}(X,\mathcal{F},\mu) \rightarrow L^{2}(X,\phi^{-1}\mathcal{F},\mu)$ associated with the $\sigma$-subalgebra $\phi^{-1}\mathcal{F}$ is the orthogonal projection onto this subspace. It is defined for all non-negative measurable functions $f$ on $X$, and for all $f \in L^{2}(X,\mathcal{F},\mu)$. For each such $f$, the function $E(f)$ is the unique $\phi^{-1}\mathcal{F}$-measurable function satisfying
$$
\int_{S} f\, d\mu = \int_{S} E(f)\, d\mu, 
~~ \text{for all } S \in \phi^{-1}\mathcal{F}.
$$
We denote by $E_{n}$ the conditional expectation associated with the $\sigma$-subalgebra $\phi^{-n}\mathcal{F}$.  
If $\phi^{-n}\mathcal{F}$ is a purely atomic $\sigma$-subalgebra generated by atoms $\{A_{k}\}_{k \ge 0}$, then for every $f \in L^{2}(\mu)$,
$$
E_{n}(f \mid \phi^{-n}\mathcal{F})
= \sum_{k=0}^{\infty}
\frac{1}{\mu(A_{k})}
\left( \int_{A_{k}} f\, d\mu \right)
\chi_{A_{k}}.
$$
In this note, we discuss completion problem for the class of  $k$-quasi $m$-isometries for the classical weighted shifts, composition operators and weighted composition operators on directed graphs with one circuit and more than one branching vertex.

\section{ $k$-quasi-$m$-isometric completion problem  for weighted shift and  composition operators }  Jablonski and Kosmider studied completion problem for  m-isometric  weighted shift and composition operators on directed graphs with one circuit \cite{ZJJK } and Exner, Jung, Stochel, and Yun investigated the problem of subnormal completion in the setting of weighted shifts on directed trees \cite{g4,EJSY}. In this section we study completion problem  for weighted shift and  composition operators for $k$-quasi-$m$-isometry on more general graph settings. We begin with the following lemma by Jablonski and Kosmider \cite{ZJJK }.
 \begin{lemma}\cite{ZJJK }\label{l1}
 	Let $l \in \mathbb{Z}^+ $ and $\{ b_n \}_{n=0}^l\subset (0, \infty) $. Then, there exists $ c \in (0, \infty) $ such that for every $ t \in [c, \infty) $, there exists a polynomial $ w_t(x) \in \mathbb{R}[x]$ of degree $ l+1 $ such that:
 	$w_t(n) = b_n,~~\text{for}~~ n \in J_{[0, l]},$
 	$w_t(l+1) = t,$ and $w_t(n) > 0,~~\text{for}~~ n \in J_{[l+2, \infty]}.$
 	
\end{lemma}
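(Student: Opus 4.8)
The plan is to build $w_t$ by Lagrange interpolation at the $l+2$ nodes $0,1,\dots,l+1$ and then exploit that the single free parameter $t$ enters linearly. For $k\in J_{[0,l+1]}$ let $\ell_k$ denote the Lagrange basis polynomial for these nodes, so that each $\ell_k$ has degree $l+1$ and $\ell_k(j)=\delta_{kj}$ for $j\in J_{[0,l+1]}$. Setting $p(x)=\sum_{k=0}^{l}b_k\,\ell_k(x)$ (a fixed polynomial, independent of $t$) and
$$w_t(x)=p(x)+t\,\ell_{l+1}(x),$$
the interpolation property immediately yields $w_t(n)=b_n$ for $n\in J_{[0,l]}$ and $w_t(l+1)=t$, so the first two required conditions hold for every value of $t$.

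First I would record the explicit form $\ell_{l+1}(x)=\frac{1}{(l+1)!}\prod_{j=0}^{l}(x-j)$, which shows that $\ell_{l+1}(n)>0$ for every integer $n\ge l+1$; in particular $\ell_{l+1}(n)>0$ throughout $J_{[l+2,\infty]}$. Because the leading coefficient of $w_t$ equals $\tfrac{t}{(l+1)!}$ plus the fixed leading coefficient of $p$, it is nonzero (indeed positive) once $t$ is large, so $w_t$ has degree exactly $l+1$ for all large $t$; this disposes of the degree requirement.

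The heart of the argument is the positivity on $J_{[l+2,\infty]}$, since this is an infinite family of constraints that must be met by a single threshold $c$. As $\ell_{l+1}(n)>0$ there, the inequality $w_t(n)>0$ is equivalent to $t>-p(n)/\ell_{l+1}(n)$, so it suffices to show that
$$S:=\sup_{n\ge l+2}\Big(-\frac{p(n)}{\ell_{l+1}(n)}\Big)$$
is finite. This is where the content lies: both $p$ and $\ell_{l+1}$ are polynomials with $\deg p\le l+1=\deg\ell_{l+1}$, so the ratio $p(n)/\ell_{l+1}(n)$ converges to a finite limit as $n\to\infty$, and hence the sequence $\{-p(n)/\ell_{l+1}(n)\}_{n\ge l+2}$ is bounded. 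The matching of degrees is precisely the mechanism that collapses infinitely many separate positivity conditions into one uniform bound, and I expect the only real obstacle to be making this finiteness clean.

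Finally I would set $c=\max\{S,0\}+1$, enlarging it if necessary so that the leading coefficient of $w_t$ remains positive. Then $c\in(0,\infty)$, and for every $t\in[c,\infty)$ we have $t>S\ge -p(n)/\ell_{l+1}(n)$ for all $n\ge l+2$, whence $w_t(n)>0$ on $J_{[l+2,\infty]}$. Together with the interpolation conditions and the degree count established above, this verifies all the asserted properties and completes the proof.
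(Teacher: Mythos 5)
Your proof is correct, but note that the paper itself offers no proof of this lemma: it is imported verbatim from the cited work of Jab\l{}o\'nski and Ko\'smider \cite{ZJJK}, so there is no internal argument to compare yours against. On its own merits your interpolation argument is complete and sound: the decomposition $w_t=p+t\,\ell_{l+1}$ makes the free parameter enter linearly; the identity $\ell_{l+1}(x)=\frac{1}{(l+1)!}\prod_{j=0}^{l}(x-j)$ gives $\ell_{l+1}(n)>0$ for every integer $n\ge l+1$; and the key finiteness step works precisely because $\deg p\le l+1=\deg \ell_{l+1}$ and $\ell_{l+1}$ is zero-free on $[l+2,\infty)$, so the sequence $\{-p(n)/\ell_{l+1}(n)\}_{n\ge l+2}$ converges to a finite limit and is therefore bounded above, collapsing the infinitely many positivity constraints into the single threshold $S$. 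You also correctly noticed the one point that is easy to overlook, namely that the lemma demands degree exactly $l+1$, and your enlargement of $c$ so that the leading coefficient $a_p+t/(l+1)!$ stays positive handles it; with $c=\max\{S,0\}+1$ (so enlarged) all three required properties hold for every $t\in[c,\infty)$. This is the natural route for such a statement, and nothing in your write-up needs repair.
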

\begin{proposition}\label{p1}
	Let $ m, l\in \mathbb{N}$, $k\in \mathbb{Z}^+$ and let $ \tilde{\lambda} := \{ \lambda_n \}_{n=1}^l \subset (0, \infty)$. Then the following holds:
		
	\begin{enumerate}
			
	\item If $ l \leq k+ m - 2$,   and an arbitrary sequence $ \{ \lambda_n \}_{n=l+1}^{k+m-2} \subset (0, \infty)$, then there exist a strict $k$-quasi-$ m $-isometric completion with initial weight $ \tilde{\lambda} := \{ \lambda_n \}_{n=1}^l \subset (0, \infty)$.
			
	\item If $ l > k+ m - 2 $, then there exist a $ k$-quasi-$ m $-isometric completion with initial weight  $ \tilde{\lambda} := \{ \lambda_n \}_{n=1}^l \subset (0, \infty)$ if and only if 
			\begin{equation}\label{eqn2}
				\sum_{j=0}^{m} (-1)^j\begin{pmatrix} m \\ j \end{pmatrix} (\beta{\tilde{\lambda}})_{k+n+j} = 0 \quad \text{for} \quad n \in J_{[0, ~~ l-(k+m)]},
			\end{equation} where 
		$$ (\beta \tilde{\lambda})_s= \left\{
		\begin{array}{ll}
			1&, ~~s=0\\
			\Pi_{i=1}^{s}\lambda_i^2&, ~~s\in J_{[1, \infty)} \\
		\end{array}\right.$$
	and there is a unique polynomial $ w $ of degree at most $ m - 1 $ that satisfies  $ w(n) = (\beta{\tilde{\lambda}})_{k+n} $ for $ n \in J_{[0, m-1]}$ also satisfies $w(n) > 0 $ for all $ n \in J_{[m, \infty]}$. Furthermore, this completion is strict $ k$-quasi-$ m $-isometric if and only if the degree of $w$ is exactly $m-1.$
		\end{enumerate}
			
\end{proposition}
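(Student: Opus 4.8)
The plan is to reduce the operator identity $W^{*k}\mathcal{B}_m(W)W^k=0$ for the unilateral weighted shift $W=W_{\tilde\lambda}$ to a finite-difference condition on its moment sequence $(\beta\tilde\lambda)_s$, and then solve the resulting interpolation-and-extension problem with Lemma~\ref{l1}. Realizing $W_{\tilde\lambda}$ on $\ell^2$ with orthonormal basis $\{e_n\}_{n\ge0}$ by $We_n=\lambda_{n+1}e_{n+1}$, one has $\|W^pe_n\|^2=(\beta\tilde\lambda)_{n+p}/(\beta\tilde\lambda)_n$, so $\mathcal{B}_m(W)$ is diagonal with $\langle\mathcal{B}_m(W)e_n,e_n\rangle=(\beta\tilde\lambda)_n^{-1}\sum_{j=0}^m(-1)^{j}\binom{m}{j}(\beta\tilde\lambda)_{n+m-j}$, and this vanishes exactly when the $m$-th finite difference $\sum_{j=0}^m(-1)^{j}\binom{m}{j}(\beta\tilde\lambda)_{n+j}$ is zero. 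Since $W^{*k}\mathcal{B}_m(W)W^k$ is again diagonal with $n$-th entry $\|W^ke_n\|^2\langle\mathcal{B}_m(W)e_{n+k},e_{n+k}\rangle$ and $\|W^ke_n\|^2>0$, the shift is $k$-quasi-$m$-isometric iff $\sum_{j=0}^m(-1)^{j}\binom{m}{j}(\beta\tilde\lambda)_{k+n+j}=0$ for every $n\ge0$, which by the standard characterization of sequences with vanishing $m$-th differences is equivalent to $n\mapsto(\beta\tilde\lambda)_{k+n}$ being a real polynomial of degree at most $m-1$. The completion is a \emph{strict} $k$-quasi-$m$-isometry precisely when this degree equals $m-1$ (equivalently, it fails to be $k$-quasi-$(m-1)$-isometric). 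Finally, because the weights are positive, choosing a completion $\{\lambda_n\}_{n>l}$ is the same as choosing positive moments $(\beta\tilde\lambda)_s$ for $s>l$, through $\lambda_{s+1}^2=(\beta\tilde\lambda)_{s+1}/(\beta\tilde\lambda)_s$.

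For part~(i), where $l\le k+m-2$, I would first fix the prescribed auxiliary weights $\{\lambda_n\}_{n=l+1}^{k+m-2}$, which determines the positive values $b_n:=(\beta\tilde\lambda)_{k+n}$ for $n\in J_{[0,m-2]}$. Feeding these $m-1$ data points to Lemma~\ref{l1} produces, for all large $t$, a polynomial $w_t$ of degree $m-1$ interpolating the $b_n$, taking the value $t$ at $n=m-1$, and positive on $J_{[m,\infty]}$; since the $b_n$ and $t$ are positive, $w_t(n)>0$ for every $n\ge0$. Setting $(\beta\tilde\lambda)_{k+n}:=w_t(n)$ then extends the moment sequence positively and consistently with the already fixed values, and the recovered weights yield a completion whose shifted moment sequence is a degree-$(m-1)$ polynomial, hence a strict $k$-quasi-$m$-isometry.

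For part~(ii), where $l>k+m-2$, at least $m$ of the constrained moments are already prescribed, so the degree-$\le(m-1)$ interpolant is overdetermined. For necessity, a completion forces $n\mapsto(\beta\tilde\lambda)_{k+n}$ to be a polynomial of degree $\le m-1$; its $m$-th differences vanish, giving \eqref{eqn2} on the computable range $n\in J_{[0,l-(k+m)]}$, and it must coincide with the unique interpolant $w$ of the first $m$ values, which is then positive everywhere, in particular on $J_{[m,\infty]}$. For sufficiency, I would take $w$ to be that unique interpolant and run a finite-difference induction: \eqref{eqn2} lets one solve for each successive moment from the previous $m$, propagating $w(n)=(\beta\tilde\lambda)_{k+n}$ across all prescribed indices $n\in J_{[0,l-k]}$. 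Defining $(\beta\tilde\lambda)_{k+n}:=w(n)$ for $n>l-k$ extends the moments; these new indices satisfy $n\ge l-k+1\ge m$, so the hypothesis $w>0$ on $J_{[m,\infty]}$ keeps them positive, and the recovered weights give the completion, strict exactly when $\deg w=m-1$.

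The step I expect to be the main obstacle is the index bookkeeping in part~(ii): one must verify that \eqref{eqn2}, which constrains only finitely many prescribed moments, is exactly strong enough to pin the \emph{whole} prescribed block onto the unique degree-$\le(m-1)$ interpolant, and that the positivity clause on $J_{[m,\infty]}$ covers precisely the indices of the newly created moments. Guaranteeing that all moments, old and new, remain strictly positive, so that the recovered $\lambda_n$ are genuine positive weights, is the recurring delicate point throughout, supplied by Lemma~\ref{l1} in part~(i) and by the explicit positivity hypothesis in part~(ii).
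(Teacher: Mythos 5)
Your proposal is correct and takes essentially the same route as the paper: reduce $k$-quasi-$m$-isometry of the shift to the vanishing of $m$-th finite differences of the shifted moment sequence $(\beta\tilde{\lambda})_{k+n}$, invoke Lemma~\ref{l1} to build the degree-$(m-1)$ extension in part (i), and characterize part (ii) through the unique degree-at-most-$(m-1)$ interpolant with the positivity clause on $J_{[m,\infty]}$. You in fact supply details the paper leaves implicit --- the diagonal computation showing $W^{*k}\mathcal{B}_m(W)W^k$ vanishes iff \eqref{eqn2} holds for all $n\ge 0$, and the finite-difference induction propagating $w(n)=(\beta\tilde{\lambda})_{k+n}$ across the whole prescribed block --- but the underlying argument is the same.
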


\begin{proof}
(i) Given that $ m, l\in \mathbb{N}$, $k\in \mathbb{Z}^+$ and let $ \tilde{\lambda} := \{ \lambda_n \}_{n=1}^l \subset (0, \infty)$. Assume that $ l \leq k+ m - 2$. Choose an arbitrary sequence $ \{ \lambda_n \}_{n=l+1}^{k+m-2} \subset (0, \infty)$. Let $\tilde{\lambda}= \{ \lambda_n \}_{n=1}^{k+m-2}$ and $ \tilde{\alpha}= \{ \alpha_s \}_{s=1}^{m-2}$ such that $\alpha_s = \lambda_{k+s}, ~~\text{for} ~~ s \in J_{[1, ~~ m-2]}.$
Define $\beta$ transformation as 
$$ (\beta \tilde{\alpha})_s= \left\{
\begin{array}{ll}
	1&, ~~s=0\\
	 \Pi_{i=1}^{s}\alpha_i^2&, ~~s\in J_{[1, m-2]} \\
\end{array}\right.$$
Therefore, $\{(\beta \tilde{\alpha})_s\}_{s=0}^{m-2}$ is finite sequence of $m-1$ terms. Then by Lemma \ref{l1}, there exists $d \in (0, \infty)$ such that for every $t\in [d, \infty)$ there exists a polynomial $ w_t(x) \in \mathbb{R}[x]$ such that 
degree of $w_t(x)$is equal to $m-1$, $ w_t(s) = (\beta {\tilde{\alpha}})_{s}, ~~  s \in J_{[0, ~~m-2]}$,  $w_t(m-1)=t$ and 
	 $w_t(s) >0 ~~\text{for} ~~s \in J_{[m,~~ \infty]}.$
	Let $ \alpha_s = \sqrt{\frac{w_t(s)}{w_t(s-1)}}, ~~s \in J_{[m-1,~~ \infty]} $.
Then $ \lambda_{k+s}= \sqrt{\frac{w_t(s)}{w_t(s-1)}}, ~~s \in J_{[m-1,~~ \infty]} $. 

Let $S_\lambda$ be a unilateral weighted shift with weight sequence $\tilde{\lambda} := \{ \lambda_n \}_{n=1}^\infty $. Now 
$ (\gamma_ {S_\lambda,e_k})_{s} =  \left \| S_\lambda^{s}e_k\right \|^{2} = \Pi_{i=1}^{s}\lambda_{k+i}^2 = (\beta {\tilde{\alpha}})_{s} = w_t(s), ~~s \in J_{[0,~~ \infty)}$. But  $S_\lambda$ is $k$-quasi-$m$-isometry if and only if 
$\sum^{m}_{j=0} (-1)^{j}\begin{pmatrix} m \\
	j \end{pmatrix}\left \| S_\lambda^{s+j}e_k\right \|^{2} =0, ~~s \in J_{[0,~~ \infty)}$. Since degree of  $w_t(x)$ is $m-1$, $S_\lambda$ is a strict $k$-quasi-$m$-isometry completion of the weight sequence $\tilde{\lambda} := \{ \lambda_n \}_{n=1}^\infty $.

To prove (ii) we assume that  $ l > k+ m - 2 $ and $ \tilde{\lambda} := \{ \lambda_n \}_{n=1}^l $. Let us choose $ \tilde{\alpha}= \{ \alpha_s \}_{s=1}^{m-2}$ is a finite sequence of $m-2$ non-negative terms as in the proof of part(i). Thus, the unilateral weighted shift $S_\lambda$ is a $k$-quasi-$m$- isometry if and only if $\sum^{m}_{j=0} (-1)^{j}\begin{pmatrix} m \\
	j \end{pmatrix}(\beta {\tilde{\alpha}})_{s+j} =0, ~~s \in J_{[0,~~ \infty)}.$
Equivalently,  
there exists a unique polynomial $ w(x) \in \mathbb{R}[x]$ such that 
degree of $w(x)$ is less than or equal to  $m-1$, $ w(s) = (\beta {\tilde{\alpha}})_{s}, ~~  s \in J_{[0, ~~m-1]}$ and  $w(s) >0, \text{for} ~~s \in J_{[m,~~ \infty]}$.
	
\end{proof}

\begin{example}
Let $m=3,~~k=2,~~l=3$ and let $ \tilde{\lambda} := \{ \lambda_n \}_{n=1}^3 $ such that $\lambda_1=1, \lambda_2=3, \lambda_3=2$.
We have $l\leq k+m-2$.  
Choose $\alpha_s = \lambda_{k+s}, s\in J_{[1, m-2]}$ and so  $\{(\beta {\tilde{\alpha}})_{s}\}_{s=0}^{m-2} = \{1, \alpha_1^2\}= \{1, 4\}.$ Then by Proposition\ref{p1}(i),there exists a polynomial $ w(x) \in \mathbb{R}[x]$ such that 
	 degree of $w(x)$is equal to $m-1= 2$, $ w(s) = (\beta {\tilde{\alpha}})_{s}, ~~  s \in J_{[0, ~~m-2]}$,  $w(s) >0, \text{for} ~~s \in J_{[m-1,~~ \infty]}.$
Therefore, $w(x)=ax^2+bx+c$ ~~such that ~~$a\ne0, c= 1, a+b=3$ and $w(s)>0, \text{for} ~~s \in J_{[m-1,~~ \infty]}$. In particular, if $a=3, b=0,$ and $c=1$ implies $w(x)=3x^2+1$. In this case we can take $\alpha_s = \sqrt{\frac{w(s)}{w(s-1)}}, \text{for} ~~s \in J_{[m-1,~~ \infty]}$. 
Hence, $  \tilde{\lambda} := \{1, 3, 2,\sqrt{\frac{13}{4}}, \sqrt{\frac{28}{13}} \ldots \}$ admits a strict $2$-quasi-$3$-isomeric completion with intial weights 
$  \tilde{\lambda} := \{1, 3, 2 \}$ 
\end{example}

\begin{example}\label{eg1}
Let $m=4,~~k=2,~~l=5$ and let $ \tilde{\lambda} := \{ \lambda_n \}_{n=1}^5 $ such that $\lambda_1=2, \lambda_2=5, \lambda_3=3, \lambda_4=1, \lambda_5=2$. Note that 
$l> k+m-2$. 
If we take   $\alpha_s = \lambda_{k+s}, s\in J_{[1, m-2]}$, $\{(\beta {\tilde{\alpha}})_{s}\}_{s=0}^{m-2} = \{1, 9, 9\}$. Since $l-(k+m)=-1$, $J_{[0, l-(k+m)]}=\emptyset$. Therefore \ref{eqn2} is trivially true. Now,  to verify the existence of $k$-quasi-$m$ -isometric completion with initial weights $ \tilde{\lambda} := \{ \lambda_n \}_{n=1}^5 $, we have to find a unique polynomial  $ w(x) \in \mathbb{R}[x]$ such that 
	 degree of $w(x)$is less than or equal to $m-1= 3$,
	 $ w(s) = (\beta {\tilde{\alpha}})_{s}, ~~  s \in J_{[0, ~~m-2]}$,
	$w(s) >0, \text{for} ~~s \in J_{[m-1,~~ \infty]}$.
Clealy, it is impossible to take degree of  $w(x)$ is equal to $0,1$ or $2$. So, there exist a unique polynomial $w(x)= \frac{35}{6}x^3-\frac{43}{2}x^2+\frac{71}{3}x+1$ satifies the required condition. Hence there exist a strict $2$-quasi-$4$-isometric completion with weights  $\tilde{\lambda} := \{ \lambda_n \}_{n=1}^ \infty $, where $\lambda_{k+n} = \sqrt{\frac{w(n)}{w(n-1)}}, n \in J_{[1,~~ \infty]}.$
\end{example}
\begin{remark}
In  Example \ref{eg1}, if we change the value of $m= 3$, then there does not exist a $2$-quasi-$3$-isometric completion with weights  $\tilde{\lambda} := \{ \lambda_n \}_{n=1}^ \infty $.
\end{remark}
 

Now we study $k$-quasi-$m$-isometric  completion problem for composition operators on directed graphs. We start with the basic structure of directed graph setting that we considered in \cite{DSP}


Let $\kappa\in \mathbb{N}$, $\eta_r\in\mathbb{Z_+\cup\{\infty\}}$, for               $r\in J_{[1,\kappa]}$ and atleast one of $\eta_r$ is nonzero. Throughout this  section we let
\begin{equation}\label{eqn1}
	 X= X_\kappa \cup\bigcup_{r=1}^{\kappa}\bigcup_{i=1}^{\eta_r} X_{\eta_r},
\end{equation}
where $X_\kappa=\{x_1,x_2,\ldots,x_k\} ~~\text{and}~~
X_{\eta_r}=\bigcup_{i=1}^{\eta_r} \{x^r_{i,j}: j\in\mathbb{N}\},~~r\in J_{[1,\kappa]}$
be two disjoint collections of distinct points in $X$.  

Consider $X$ as a directed graph containing a single circuit, where $\{x_1,x_2,\ldots,x_k\}$ forms the set of branching vertices on this circuit.  
For each $r\in J_{[1,\kappa]}$, the set $X_{\eta_r}$ represents the branching elements associated with $x_r$, and  $\{x^r_{i,j}: j\in\mathbb{N}\}$ denotes the vertices along the $i^{\text{th}}$ branch emanating from $x_r$ for $i\in J_{[1,\eta_r]}$.  
Here, $\eta_r$ denotes the number of branches originating from the vertex $x_r$. Recently, a more general form of this class of graphs has been applied by Buchala \cite{MB} in the investigation of $m$-isometric composition operators on discrete spaces, as well as in the study of the subnormality problem for Cauchy dual operators. Figure 1 illustrates this construction for the specific case $\kappa = 4$ and $\eta_{r}=2$ for all $r\in J_{[1,\kappa]}$.
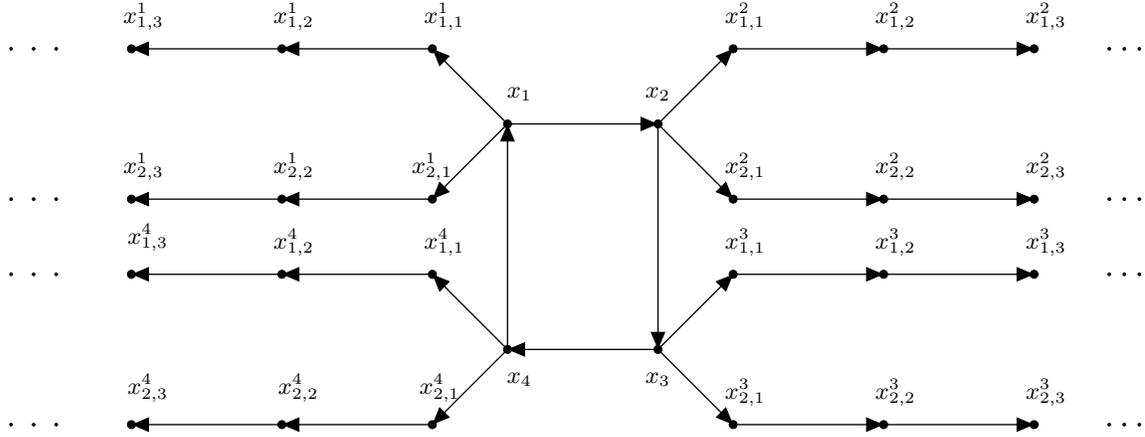
\begin{figure}
	\centering
	\begin{tikzpicture}[line cap=round,line join=round,>=triangle 45,x=1cm,y=1cm]
		\clip(-8,-3.28) rectangle (13.72,5.24);
		\draw [->,line width=.5pt] (-1,2) -- (1,2);
		\draw [->,line width=.5pt] (1,2) -- (1,-1);
		\draw [->,line width=.5pt] (1,-1) -- (-1,-1);
		\draw [->,line width=.5pt] (-1,-1) -- (-1,2);
		\draw [->,line width=.5pt] (1,2) -- (2,3);
		\draw [->,line width=.5pt] (1,2) -- (2,1);
		\draw [->,line width=.5pt] (2,1) -- (4,1);
		\draw [->,line width=.5pt] (4,1) -- (6,1);
		\draw [->,line width=.5pt] (1,-1) -- (2,0);
		\draw [->,line width=.5pt] (2,0) -- (4,0);
		\draw [->,line width=.5pt] (4,0) -- (6,0);
		\draw [->,line width=.5pt] (-1,2) -- (-2,3);
		\draw [->,line width=.5pt] (-2,3) -- (-4,3);
		\draw [->,line width=.5pt] (-4,3) -- (-6,3);
		\draw [->,line width=.5pt] (-1,2) -- (-2,1);
		\draw [->,line width=.5pt] (-2,1) -- (-4,1);
		\draw [->,line width=.5pt] (-4,1) -- (-6,1);
		\draw [->,line width=.5pt] (2,3) -- (4,3);
		\draw [->,line width=.5pt] (4,3) -- (6,3);
		\draw [->,line width=.5pt] (1,-1) -- (2,-2);
		\draw [->,line width=.5pt] (2,-2) -- (4,-2);
		\draw [->,line width=.5pt] (4,-2) -- (6,-2);
		\draw [->,line width=.5pt] (-1,-1) -- (-2,0);
		\draw [->,line width=.5pt] (-2,0) -- (-4,0);
		\draw [->,line width=.5pt] (-4,0) -- (-6,0);
		\draw [->,line width=.5pt] (-1,-1) -- (-2,-2);
		\draw [->,line width=.5pt] (-2,-2) -- (-4,-2);
		\draw [->,line width=.5pt] (-4,-2) -- (-6,-2);
		\begin{scriptsize}
			\draw [fill=black] (-1,2) circle (1.5pt);
			\draw[color=black] (-0.84,2.4) node {$x_1$};
			\draw [fill=black] (1,2) circle (1.5pt);
			\draw[color=black] (1,2.4) node {$x_2$};
			\draw [fill=black] (1,-1) circle (1.5pt);
			\draw[color=black] (1,-1.4) node {$x_3$};
			\draw [fill=black] (-1,-1) circle (1.5pt);
			\draw[color=black] (-0.84,-1.4) node {$x_4$};
			\draw [fill=black] (2,3) circle (1.5pt);
			\draw[color=black] (2.16,3.43) node {$x_{1,1}^2$};
			\draw [fill=black] (2,1) circle (1.5pt);
			\draw[color=black] (2.16,1.43) node {$x_{2,1}^2$};
			\draw [fill=black] (4,3) circle (1.5pt);
			\draw[color=black] (4.16,3.43) node {$x_{1,2}^2$};
			\draw [fill=black] (6,3) circle (1.5pt);
			\draw[color=black] (6.16,3.43) node {$x_{1,3}^2$};
			\draw [fill=black] (7,3) circle (.5pt);
			\draw [fill=black] (7.2,3) circle (.5pt);
			\draw [fill=black] (7.4,3) circle (.5pt);
			\draw [fill=black] (7,1) circle (.5pt);
			\draw [fill=black] (7.2,1) circle (.5pt);
			\draw [fill=black] (7.4,1) circle (.5pt);
			\draw [fill=black] (7,0) circle (.5pt);
			\draw [fill=black] (7.2,0) circle (.5pt);
			\draw [fill=black] (7.4,0) circle (.5pt);
			\draw [fill=black] (7,-2) circle (.5pt);
			\draw [fill=black] (7.2,-2) circle (.5pt);
			\draw [fill=black] (7.4,-2) circle (.5pt);
			\draw [fill=black] (-7,3) circle (.5pt);
			\draw [fill=black] (-7.3,3) circle (.5pt);
			\draw [fill=black] (-7.6,3) circle (.5pt);
			\draw [fill=black] (-7,1) circle (.5pt);
			\draw [fill=black] (-7.3,1) circle (.5pt);
			\draw [fill=black] (-7.6,1) circle (.5pt);
			\draw [fill=black] (-7,0) circle (.5pt);
			\draw [fill=black] (-7.3,0) circle (.5pt);
			\draw [fill=black] (-7.6,0) circle (.5pt);
			\draw [fill=black] (-7,-2) circle (.5pt);
			\draw [fill=black] (-7.3,-2) circle (.5pt);
			\draw [fill=black] (-7.6,-2) circle (.5pt);
			
			\draw [fill=black] (4,1) circle (1.5pt);
			\draw[color=black] (4.16,1.43) node {$x_{2,2}^2$};
			\draw [fill=black] (6,1) circle (1.5pt);
			\draw[color=black] (6.16,1.43) node {$x_{2,3}^2$};
			\draw [fill=black] (2,0) circle (1.5pt);
			\draw[color=black] (2.16,0.43) node {$x_{1,1}^3$};
			\draw [fill=black] (4,0) circle (1.5pt);
			\draw[color=black] (4.16,0.43) node {$x_{1,2}^3$};
			\draw [fill=black] (6,0) circle (1.5pt);
			\draw[color=black] (6.16,0.43) node {$x_{1,3}^3$};
			\draw [fill=black] (-2,3) circle (1.5pt);
			\draw[color=black] (-1.84,3.43) node {$x_{1,1}^1$};
			\draw [fill=black] (-4,3) circle (1.5pt);
			\draw[color=black] (-3.84,3.43) node {$x_{1,2}^1$};
			\draw [fill=black] (-6,3) circle (1.5pt);
			\draw[color=black] (-5.84,3.43) node {$x_{1,3}^1$};
			\draw [fill=black] (-2,1) circle (1.5pt);
			\draw[color=black] (-2,1.43) node {$x_{2,1}^1$};
			\draw [fill=black] (-4,1) circle (1.5pt);
			\draw[color=black] (-3.84,1.43) node {$x_{2,2}^1$};
			\draw [fill=black] (-6,1) circle (1.5pt);
			\draw[color=black] (-5.84,1.43) node {$x_{2,3}^1$};
			
			\draw [fill=black] (2,-2) circle (1.5pt);
			\draw[color=black] (2.16,-1.57) node {$x_{2,1}^3$};
			\draw [fill=black] (4,-2) circle (1.5pt);
			\draw[color=black] (4.16,-1.57) node {$x_{2,2}^3$};
			\draw [fill=black] (6,-2) circle (1.5pt);
			\draw[color=black] (6.16,-1.57) node {$x_{2,3}^3$};
			\draw [fill=black] (-2,0) circle (1.5pt);
			\draw[color=black] (-1.84,0.43) node {$x_{1,1}^4$};
			\draw [fill=black] (-4,0) circle (1.5pt);
			\draw[color=black] (-3.84,0.43) node {$x_{1,2}^4$};
			\draw [fill=black] (-6,0) circle (1.5pt);
			\draw[color=black] (-5.78,0.49) node {$x_{1,3}^4$};
			\draw [fill=black] (-2,-2) circle (1.5pt);
			\draw[color=black] (-1.9,-1.51) node {$x_{2,1}^4$};
			\draw [fill=black] (-4,-2) circle (1.5pt);
			\draw[color=black] (-3.78,-1.51) node {$x_{2,2}^4$};
			\draw [fill=black] (-6,-2) circle (1.5pt);
			\draw[color=black] (-5.78,-1.51) node {$x_{2,3}^4$};
		\end{scriptsize}
	\end{tikzpicture}
\caption{Directed graph with one circuit and more than one branching vertex}
\end{figure}\label{fig:Directed graph with one circuit}

Let $(X, \mathcal{F}, \mu)$ be a $\sigma$-finite measure space, where $\mu$ is a $\sigma$-finite positive measure on $X$ such that $\mu(\{x\})>0$, for all $x\in X$.  
To describe the parent function on $(X,\mathcal{F},\mu)$, which will be used to characterize the atoms of the $\sigma$-algebra $\phi^{-p}(\mathcal{F})$ within $\mathcal{F}$, we introduce two auxiliary functions $\Phi_1$ and $\Phi_2$.  

Fix $\kappa\in\mathbb{N}$, and define $\Phi_1:\mathbb{Z}\to\mathbb{Z}$ and $\Phi_2:\mathbb{Z}\to J_{[1,\kappa]}$ uniquely by the relation
$$
p=\Phi_1(p)\kappa+\Phi_2(p),\qquad p\in\mathbb{Z}.
$$
These functions satisfy
$$
\Phi_1(l\kappa+1)=\Phi_1(l\kappa+r),\qquad l\in\mathbb{Z},~ r\in J_{[1,\kappa]},
$$
$$
\Phi_2(l\kappa+r_1+r_2)=\Phi_2(l\kappa+r_1)+r_2,~~ l\in\mathbb{Z}, ~~~\text{for}~~~ r_1\in\mathbb{N},~ r_2\in\mathbb{Z}_+,~ r_1+r_2\in J_{[1,\kappa]}.  
$$

Using the directed graph described above, we now obtain the corresponding parent function as follows:

\begin{align*}
	par(x)= \left\{
	\begin{array}{ll}\
		x^r_{i,j}, & \mathrm{if}~~ x=x^r_{i,j+1} ~~~\mathrm{for}~~r\in J_{[1,\kappa]},~~~ i\in J_{[1,\eta_r]},~~ \mathrm{and}~~j\in \mathbb{N} ,\\\\
		x_r, & \mathrm{if}~~ x=x^s_{i,j}, ~~~\mathrm{for}~~ s\in J_{[1,\kappa]}  \mathrm{~and}~ \Phi_2(1+r)=\Phi_2(s+j),~  j\in\mathbb{N}, \\
		& i\in J_{[1,\eta_s]},~~ \mathrm{or}~~x=x_{\Phi_2(1+r)}.
	\end{array}\right. \end{align*}

\begin{align}\label{equ1} 
\textrm{ Assume that } (X, \mathcal{F}, \mu) \textrm{ is a discrete mesaure space  and } \phi : X\rightarrow X \textrm{ is a}\nonumber \\~~   \textrm{ nonsingular measurable transformation on X ~~defined by }
	\phi(x)=par(x),~~x\in X.
\end{align}


Given that $\mu(x) > 0$ for every $x \in X$, the transformation $\phi$ is nonsingular, and consequently, $\phi^p$ is also nonsingular for $p \in \mathbb{N}$. Therefore, the Radon-Nikodym derivative $h_p = \frac{d(\mu \circ \phi^{-p})}{d\mu}$ can be determined using the atoms of the $\sigma$-algebra $\phi^{-p}(\mathcal{F})$ (see \cite{DSP}) as follows:
$$h_p(x)= \left\{
\begin{array}{ll}
	\frac{\mu(x^r_{i,j+p})}{\mu(x^r_{i, j})}, & \mathrm{if}~~ x=x^r_{i,j},  ~~ r\in J_{[1,\kappa]},  ~~  i\in J_{[1,\eta_r]}, \\
	& ~~j\in \mathbb{N},\\\\
	\frac{\mu(x_{\Phi_{2}(p+r)})
		+ \displaystyle\sum_{j=1}^{p}
		\sum_{\substack{s=1\\\Phi_{2}(p+r)=\Phi_{2}(s+j)}}^{\kappa}
		\sum_{i=1}^{\eta_{s}}
		\mu(x^{s}_{i,j})}{\mu(\{x_r\})},	 &\mathrm{if}~~ x=x_r,~~r\in J_{[1,\kappa]} .\\\\
\end{array}\right.$$


 \begin{theorem}\label{Tm1}
 Let $m,\kappa,k \in \mathbb{N}$ such that $\kappa>m\geq 2$. Assume that  $\eta_{r},  X, \mathcal{F},\text{and} ~~~\phi $ are as in \eqref{equ1}, $ M\in (0, \infty)$, $\{w_{i}^{r}\}_{i=1}^{\eta_r}$ is system of polinomials of degree atmost~~~~ $m-2,$ $\{ a^r_{i,j}\}_{j=1}^\infty\subset (0, \infty)$ such that $w_{i}^{r}(j)= a^r_{i,k+j}, \textrm{for~~all~~}  r \in J_{[1,\kappa]}, i \in J_{[1,\eta_r]}, j \in \mathbb{N}$ and
  \begin{equation}\label{eqn3}
  	\underset{r \in J_{[1,\kappa]}}{\max} \left\{ \sum_{i=1}^{\eta_r} a_{i, 1}^r, \sup\{\frac{a_{i, j+1}^r}{a_{i, j}^r}:i \in J_{[1,\eta_r]}, j \in \mathbb{N}\} \right\}\leq M. 
  \end{equation}
  Then there exist a measure $\mu$ on $\mathcal{F}$ sucht that 
 \begin{enumerate}
  \item $ \mu(x^{r}_{i, j})= a_{i, j}^r, \textrm{for~~all~~} r \in J_{[1,\kappa]}, i \in J_{[1,\eta_r]} \text{and} ~~ j \in \mathbb{N} $ and \\
   \item the corresponding $C_\phi\in B({L^{2}(\mu)})$ is $k$-quasi-$m$ -isometry. 
 \end{enumerate}
Furthermore, there exists  $ c\in \mathbb{R}$ such that for every $ t\in (c, \infty)$ there is a measure $\mu_t$ with the above properties can be written as the form  
$$\mu_t(x) =
\begin{cases}
	\mu(x) + t, & \text{if } x = x_r \text{ for} ~~r \in J_\kappa, \\
	\mu(x), & \text{otherwise}.
\end{cases} $$
 \end{theorem}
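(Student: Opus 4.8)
The plan is to first reduce the operator identity $C_\phi^{*k}\mathcal{B}_m(C_\phi)C_\phi^{k}=0$ to a pointwise condition on Radon--Nikodym derivatives. Since $\|C_\phi^{n}f\|^2=\int h_n|f|^2\,d\mu$ gives $C_\phi^{*n}C_\phi^{n}=M_{h_n}$, we get $\mathcal{B}_m(C_\phi)=M_G$ with $G=\sum_{j=0}^{m}(-1)^{j}\binom{m}{j}h_{m-j}$, and a direct computation on the atoms $\{x\}$ shows that $C_\phi^{*k}M_G C_\phi^{k}$ is the diagonal operator $\chi_{\{z\}}\mapsto \mu(z)^{-1}\big(\sum_{x\in\phi^{-k}(z)}G(x)\mu(x)\big)\chi_{\{z\}}$. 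Using the cocycle identity $\sum_{x\in\phi^{-k}(z)}h_u(x)\mu(x)=\mu(\phi^{-(k+u)}(z))=h_{k+u}(z)\mu(z)$ (which follows from $h_p(z)\mu(z)=\mu(\phi^{-p}(z))$ and disjointness of the fibres) this eigenvalue equals $\sum_{u=0}^{m}(-1)^{m-u}\binom{m}{u}h_{k+u}(z)$, so $C_\phi$ is $k$-quasi-$m$-isometric if and only if
\[
\sum_{u=0}^{m}(-1)^{u}\binom{m}{u}h_{k+u}(z)=0\qquad\text{for every }z\in X.
\]

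Next I would test this condition separately on branch and circuit vertices, declaring $\mu(x^{r}_{i,j}):=a^{r}_{i,j}$ on the branches at the outset. For a branch vertex $z=x^{r}_{i,n}$ ($n\ge1$) the fibre $\phi^{-(k+u)}(z)$ is the single point $x^{r}_{i,n+k+u}$, so the displayed identity becomes $\sum_{u}(-1)^{u}\binom{m}{u}a^{r}_{i,n+k+u}=0$; since $n+k\ge k+1$ every index lies in the range where $a^{r}_{i,k+s}=w^{r}_i(s)$, the left side is the $m$-th finite difference $\Delta^{m}w^{r}_i$ evaluated at $n$, which vanishes because $\deg w^{r}_i\le m-2<m$. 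Thus property (ii) holds automatically on every branch, independently of the circuit masses, and property (i) is imposed by fiat.

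The remaining equations come from the circuit vertices $z=x_{\rho}$ and determine the unknowns $y_r:=\mu(x_r)$. Writing $\mu(\phi^{-(k+u)}(x_\rho))$ as a single circuit contribution $y_{\sigma^{-(k+u)}(\rho)}$ (where $\sigma$ is the cyclic shift induced by $\phi$ on the circuit) plus a branch contribution $B(k+u,\rho)$ depending only on the known $a$'s, the condition becomes a linear system $M\mathbf{y}=\mathbf{b}$ with $b_\rho=-\sum_u(-1)^u\binom{m}{u}B(k+u,\rho)$. The matrix $M$ is circulant with Fourier symbol $(1-\omega^{-b})^{m}$ at frequency $b$, where $\omega=e^{2\pi i/\kappa}$, the hypothesis $\kappa>m$ keeping the $m+1$ shifts $\sigma^{-(k+u)}(\rho)$ in a single clean band; hence $M$ has one-dimensional kernel spanned by $\mathbf{1}$ and range $\{\mathbf{v}:\sum_\rho v_\rho=0\}$. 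I would then check solvability $\sum_\rho b_\rho=0$: summing $B(P,\rho)$ over $\rho$ counts each depth-$j$ branch vertex ($j\le P$) once, giving $\sum_\rho B(P,\rho)=\sum_{j=1}^{P}\Sigma_j=:T(P)$ with $\Sigma_j=\sum_{s,i}a^{s}_{i,j}$, so that $\sum_\rho b_\rho=-(-1)^{m}\Delta^{m}T(k)=-(-1)^{m}\Delta^{m-1}\Sigma(k+1)=0$, again because $\Sigma_j$ agrees with a polynomial of degree $\le m-2$ for $j\ge k+1$. Therefore $M\mathbf{y}=\mathbf{b}$ is consistent and its full solution set is the line $\{\mathbf{y}^{(0)}+t\mathbf{1}:t\in\mathbb{R}\}$, which is exactly the asserted family $\mu_t(x_r)=\mu(x_r)+t$.

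Finally I would secure positivity and boundedness. For $t$ larger than $c:=\max_r(-y^{(0)}_r)$ all circuit masses $y^{(0)}_r+t$ are positive, so $\mu_t$ is a genuine measure realizing (i) and (ii); on the branches $h_1=a^{r}_{i,j+1}/a^{r}_{i,j}\le M$ by \eqref{eqn3}, while at each circuit vertex $h_1(x_r)=\big(y^{(0)}_{\sigma^{-1}(r)}+t+\sum_i a^{r}_{i,1}\big)/\big(y^{(0)}_r+t\big)\to1$ as $t\to\infty$ and stays finite on $(c,\infty)$, the mass $\sum_i a^{r}_{i,1}\le M$ being controlled by \eqref{eqn3}; hence $\|h\|_\infty<\infty$ and $C_\phi\in B(L^2(\mu_t))$. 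The main obstacle is the circuit step: recognizing that the circuit equations assemble into a circulant finite-difference operator, computing its kernel and range, and verifying that the degree bound $\deg w^{r}_i\le m-2$ forces the branch data into the range (the vanishing of $\Delta^{m}T(k)$) so the system is consistent, with $\kappa>m$ keeping the operator non-degenerate.
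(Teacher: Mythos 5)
Your argument is correct, and its skeleton coincides with the paper's: fix the branch masses $\mu(x^{r}_{i,j})=a^{r}_{i,j}$ at the outset, note that the branch equations hold automatically because the $m$-th finite differences of a polynomial of degree at most $m-2$ vanish, reduce the circuit vertices to a $\kappa\times\kappa$ circulant linear system in the unknowns $\mu(x_{r})$, and generate the family $\mu_{t}$ from the constant vector in the kernel of that system (the paper's observation that $Av=0$ for $v=[t,\dots,t]$). The differences are in completeness, and they favor you. First, where the paper outsources the characterization to \cite[Theorem 2.6]{DSP}, you rederive it directly from $C_\phi^{*n}C_\phi^{n}=M_{h_n}$, the diagonality of $C_\phi^{*k}M_G C_\phi^{k}$ on atoms, and the fibre identity $\sum_{x\in\phi^{-k}(z)}h_u(x)\mu(x)=h_{k+u}(z)\mu(z)$; your raw branch condition is degree $\le m-1$ rather than the paper's $m-2$, but sufficiency is all the theorem needs, and your consistency computation explains why the hypothesis insists on $m-2$: that is precisely what pushes the right-hand side into the range of the circulant. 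Second, and this is the substantive improvement, the paper simply asserts $\operatorname{rank}(A)=\kappa-1=\operatorname{rank}([A\,|\,B])$ before invoking Rouch\'e--Capelli, with no verification that $B$ is orthogonal to the cokernel; you actually prove it: the symbol $\omega^{-kb}(1-\omega^{-b})^{m}$ shows $\ker A=\operatorname{span}\{\mathbf{1}\}$ and $\operatorname{ran}A=\{\mathbf{v}:\sum_{\rho}v_{\rho}=0\}$, and the counting identity $\sum_{\rho}B(P,\rho)=\sum_{j=1}^{P}\Sigma_{j}$ together with $\Delta^{m}T(k)=\Delta^{m-1}\Sigma_{k+1}=0$ yields $\sum_{\rho}b_{\rho}=0$. (Note that your rank computation does not in fact use $\kappa>m$; that hypothesis only keeps the matrix entries equal to the unaliased binomial coefficients, as in the paper's $A$.) One small point to patch: when some $\eta_{r}=\infty$, the assertion that $\Sigma_{j}=\sum_{s,i}a^{s}_{i,j}$ agrees with a polynomial of degree $\le m-2$ for $j\ge k+1$ is a pointwise limit of polynomials of uniformly bounded degree; condition \eqref{eqn3} guarantees convergence of the (monotone) partial sums, and interpolation at $m-1$ nodes shows the limit is again such a polynomial --- this deserves a sentence, though the paper glosses over the same issue in its $b_{r}$. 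Your positivity and boundedness checks are fine ($t>c:=\max_{r}(-y^{(0)}_{r})$, and $\|h\|_{\infty}<\infty$ because only the finitely many circuit atoms are not controlled by \eqref{eqn3}); the observation that $h_{1}(x_{r})\to 1$ as $t\to\infty$ is unnecessary.
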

\begin{proof}
Define $ \mu(x^{r}_{i, j})=a_{i, j}^r, \text{ for~ all}~~r \in J_{[1,\kappa]}, i \in J_{[1,\eta_r]} , ~~ j \in \mathbb{N} $. Then by (\ref{eqn3}) the Radon-Nikodym derivative $h$ is essentially bounded and so $C_\phi \in B(L^2(\mu))$. By  \cite[Theorem 2.6]{DSP}  $C_\phi $ is $k$-quasi-$m$ -isometry if and only if 
$ \{\mu(x^{r}_{i,k+j+1})\}_{j=0}^\infty $ is a polynomial in $j$ of degree at most $m-2$ for every $r \in J_{[1,\kappa]}, i \in J_{[1,\eta_r]} $ and  
$	\displaystyle\sum_{p=0}^{m} (-1)^p \begin{pmatrix} m \\
	p \end{pmatrix} h_{p+k}(x_r)= 0,  ~~ \textrm{for all}~~r \in J_{[1,\kappa]} .$\\

From the hypothesis it is clear that $ \{\mu(x^{r}_{i,k+j+1})\}_{j=0}^\infty $ is a polynomial in $j$ of degree at most $m-2$ for every $r\in J_{[1,\kappa]} $, $i\in J_{[1,\eta_r]}$.
Now   
$	\displaystyle\sum_{p=0}^{m} (-1)^p \begin{pmatrix} m \\
	p \end{pmatrix} h_{p+k}(x_r)= 0,  ~~ \textrm{for all}~~r \in J_{[1,\kappa]}  $ if and only if \\
$\displaystyle\sum_{p=0}^{m} (-1)^p \begin{pmatrix} m \\
	p \end{pmatrix}(\mu(x_{{\Phi_2}{(p+k+r)}})+\displaystyle\sum_{j=1}^{p+k}\displaystyle\sum_{\substack {s=1,   \\\Phi_2(p+k+r)=\Phi_2(s+j)} }^{\kappa}
\displaystyle\sum _{i=1}^{\eta_{s}} \mu(x^s_{i,j}))= 0, \textrm{~~for all}~~r \in J_{[1,\kappa]} . $
That is,
 \begin{equation}\label{eqn4}
 \displaystyle\sum_{p=0}^{m} (-1)^p \begin{pmatrix} m \\
 		p \end{pmatrix}\mu(x_{{\Phi_2}{(p+k+r)}})=b_r, \textrm{for all}~~r \in J_{[1,\kappa]} , 
 \end{equation}
 where $ b_r= \displaystyle\sum_{p=0}^{m} (-1)^{p+1} \begin{pmatrix} m \\
 	p \end{pmatrix}\S\displaystyle\sum_{j=1}^{p+k}\displaystyle\sum_{\substack {s=1,   \\\Phi_2(p+k+r)=\Phi_2(s+j)} }^{\kappa}
 \displaystyle\sum _{i=1}^{\eta_{s}} \mu(x^s_{i,j}))= 0, \textrm{~~for all}~~r \in J_{[1,\kappa]} . $\\
 Let 
 $$a_p :=
 \begin{cases}
 	(-1)^p \begin{pmatrix} m \\
 		p \end{pmatrix} &, \text{if } 0\leq p \leq m, \\
 	0 &, \text{if } m<p \leq{\kappa-1}
 \end{cases}. $$
Then (\ref{eqn4}) can be written as follows
\begin{equation}\label{eqn5}
	 AX=B
\end{equation}
where $ A = \begin{bmatrix}
	a_0 & a_1 & a_2 & \ldots& a_{\kappa-2} & a_{\kappa-1}\\
	 a_{\kappa-1} & a_0 & a_1 & \ldots & a_{\kappa-3} & a_{\kappa-2} \\
	 a_{\kappa-2} &  a_{\kappa-1} & a_0 &  \ldots & a_{\kappa-4} & a_{\kappa-3}\\
	 \vdots & \vdots& \vdots& \ldots & \vdots & \vdots\\
	 a_1 & a_2 & a_3 & \ldots & a_{\kappa-1} & a_0 
\end{bmatrix} $, 
$ X = \begin{bmatrix}
	\mu(x_{{\Phi_2}{(k+1)}})\\
	\mu(x_{{\Phi_2}{(k+2)}})\\
	\vdots\\
	\mu(x_{{\Phi_2}{(k+\kappa)}})
	\end{bmatrix}$, and 
$ B= \begin{bmatrix}
b_1\\
	b_2\\
	\vdots\\
	b_\kappa
\end{bmatrix}$

 Since $ \mu(x_{{\Phi_2}{(k+r)}}), r \in J_{[1,\kappa]}$ are the rearrangement of $ \mu(x_1), \mu(x_2),\ldots \mu(x_\kappa)$, then we have  $ rank(A) = \kappa-1 = rank(AB)$. Therefore, by applying the Rouché–Capelli theorem,  there exist a solution for (\ref{eqn5}). 
 
  To establish the second part, observe that there exist a positive real  number $c$ such that $\mu(x) +t >0$, for all $ t \in [c, \infty)$ and the vector $ v = [t, t, \ldots, t] \in \mathbb{R}^{\kappa} $ satisfies $ Av = 0 $. Consequently, for every solution $ X $ of (\ref{eqn5}), the relation $ A(X + v) = B $ also holds. 
  
\end{proof}
  
 We now study a complete characterization of $1$-quasi-$3$-isometric and
 $1$-quasi-$2$-isometric composition operators acting on above type of directed graphs.
 Throughout this section, we restrict attention to the case where the length of
 the circuit satisfies $1 < \kappa \leq 3$.
  \begin{theorem}\label{Tm2}
  	Let $\kappa=2$.  Assume that  $\eta_{r},  X, \mathcal{F},\text{and} ~~~\phi $ are as in \eqref{equ1}. Then $C_\phi\in B({L^{2}(\mu)})$ is $1$-quasi-$3$-isometry if and only if there exist $M, t\in(0,\infty)$ and a system of polynomials $\{ w^r_{i,j}\}_{i=1}^{\eta_{r}}$ of degree atmost 1 and two systems $ \{ c^r_i\}_{i=1}^{\eta_{r}} \subset (0,\infty)$ and $ \{d^r_i\}_{i=1}^{\eta_{r}} \subset \mathbb{R}_{+} $ such that 
  	\begin{equation}\label{eqn6}
  		\underset{r \in J_{[1,\kappa]} }{\max} \left\{ \sum_{i=1}^{\eta_r} \mu (x_{i, 1}^r), \sup\{\frac{\mu (x_{i, j+1}^r)}{\mu (x_{i, j}^r)}:i \in J_{[1,\eta_r]} , j \in \mathbb{N}\} \right\}\leq M 
  	\end{equation}
  	
  and $c^{(r)}= \sum_{i=1}^{\eta_{r}}c^r_i < \infty$, $d^{(r)}= \sum_{i=1}^{\eta_{r}}d^r_i < \infty$, 
  \begin{equation}\label{eqn7}
  	 \mu (x_{i, j+1}^r)= w^r_{i,j+1}= c^r_i +d^r_i(j-1), ~~ r\in J_{[1,\kappa]} , i\in J_{[1,\eta_r]}  , j\in \mathbb{N},
  \end{equation}
  
  $$ \sum_{i=1}^{\eta_1}\mu (x_{i, 1}^1)+ \sum_{i=1}^{\eta_2}\mu (x_{i, 1}^2)= \sum_{r=1}^{2}c^{(r)},$$
  $$\mu(x_r)= W(r), r\in J_{[1,\kappa]} , $$
  where $W(x)=ax^2+bx+t$ is polynomial of degree atmost 2 such that 
  $$3a+b =\frac{7\sum_{i=1}^{\eta_1} \mu(x^1_{i,1})-5c^{(1)}+d^{(1)}-2c^{(2)}-d^{(2)}}{2\kappa}$$
  	 
  \end{theorem}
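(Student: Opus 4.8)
The proof will rest on the characterization of $k$-quasi-$m$-isometric composition operators employed in the proof of Theorem \ref{Tm1} (namely \cite[Theorem 2.6]{DSP}), now specialized to $k=1$, $m=3$ and $\kappa=2$. That criterion asserts that $C_\phi\in B(L^2(\mu))$ is a $1$-quasi-$3$-isometry exactly when (a) for each $r\in\{1,2\}$ and $i\in J_{[1,\eta_r]}$ the sequence $\{\mu(x^r_{i,j+2})\}_{j=0}^\infty$ is a polynomial in $j$ of degree at most $m-2=1$, and (b) $\sum_{p=0}^{3}(-1)^p\binom{3}{p}h_{p+1}(x_r)=0$ for $r=1,2$; moreover boundedness of $C_\phi$ is equivalent to essential boundedness of $h$, which is precisely the uniform estimate \eqref{eqn6}. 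Thus I would prove both implications by showing that (a) and (b), after unwinding, are equivalent to the existence of the data $c^r_i,d^r_i,W,t$ with the listed relations.

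The first task is essentially bookkeeping. Condition (a) says that $j\mapsto\mu(x^r_{i,j+2})$ is affine, which is the assertion that there are numbers $c^r_i,d^r_i$ and degree-$\le 1$ polynomials $w^r_{i,j+1}$ with $\mu(x^r_{i,j+1})=w^r_{i,j+1}=c^r_i+d^r_i(j-1)$ for $j\in\mathbb{N}$, i.e.\ \eqref{eqn7}; here $c^r_i=\mu(x^r_{i,2})$ and $d^r_i$ is the common slope. Since $\mu$ is a positive measure, $c^r_i\in(0,\infty)$, and because an affine sequence of negative slope cannot stay positive, $d^r_i\in\mathbb{R}_+$. Essential boundedness of $h$ forces the ratios $\mu(x^r_{i,j+1})/\mu(x^r_{i,j})$ to stay bounded and the level sums to be finite, which is exactly \eqref{eqn6} together with $c^{(r)}=\sum_i c^r_i<\infty$ and $d^{(r)}=\sum_i d^r_i<\infty$.

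The heart of the argument is condition (b). With $\kappa=2$ the auxiliary map $\Phi_2$ is the parity map, $\Phi_2(n)=1$ if $n$ is odd and $\Phi_2(n)=2$ if $n$ is even, so the explicit expression for $h_p(x_r)$ recorded before Theorem \ref{Tm1} simplifies: clearing the common denominator $\mu(x_r)$, its numerator is
$$N^r_p=\mu\!\left(x_{\Phi_2(p+r)}\right)+\sum_{j=1}^{p}\sum_{i=1}^{\eta_{s(j)}}\mu\!\left(x^{s(j)}_{i,j}\right),$$
where $s(j)\in\{1,2\}$ is the unique index with $s(j)\equiv p+r+j\pmod 2$. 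I would evaluate $N^r_1,\dots,N^r_4$ for $r=1,2$, insert them into $N^r_1-3N^r_2+3N^r_3-N^r_4=0$, and then substitute the affine values $\sum_i\mu(x^s_{i,j})=c^{(s)}+d^{(s)}(j-2)$ for $j\ge 2$ together with the free level-one sums $\sum_i\mu(x^s_{i,1})$. Collecting the surviving terms collapses the two relations into linear identities in $\mu(x_1),\mu(x_2),c^{(r)},d^{(r)}$ and the level-one sums; organizing these yields the conservation identity $\sum_i\mu(x^1_{i,1})+\sum_i\mu(x^2_{i,1})=c^{(1)}+c^{(2)}$ together with the slope relation for $3a+b$ once one writes $\mu(x_r)=W(r)$ with $W(x)=ax^2+bx+t$. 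I expect this collection step to be the main obstacle: the binomial weights $1,-3,3,-1$ must cancel the higher-level ($j\ge 2$) branch contributions against one another so that only the stated combination of the intercepts $c^{(r)}$, the slopes $d^{(r)}$ and the level-one sums remains, and keeping the parity bookkeeping of $s(j)$ correct across the four values of $p$ and the two residues $r$ is where errors are most likely.

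For the converse I would run the computation backwards: given $c^r_i,d^r_i,t,W$ satisfying \eqref{eqn6}, \eqref{eqn7}, the conservation identity and the slope relation, I define $\mu$ on the atoms of $X$ by these prescriptions, note that \eqref{eqn6} makes $h$ essentially bounded so that $C_\phi\in B(L^2(\mu))$, and verify (a) and (b) by reversing the two reductions above, whence $C_\phi$ is a $1$-quasi-$3$-isometry. The additive constant $t$ plays the role of the null direction $v$ with $Av=0$ in the proof of Theorem \ref{Tm1}: the branching-vertex conditions pin down only the difference $\mu(x_2)-\mu(x_1)=3a+b$, leaving a one-parameter freedom parametrized by $t$, and this is why the statement only requires $t\in(0,\infty)$, with $t$ taken large enough to keep $\mu(x_r)=W(r)$ positive for $r=1,2$.
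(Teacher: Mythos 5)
Your proposal follows essentially the same route as the paper's proof: boundedness of $C_\phi$ via \eqref{eqn6}, the characterization of \cite[Theorem 2.6]{DSP} specialized to $k=1$, $m=3$ (the affine condition on $\{\mu(x^r_{i,j+2})\}_{j\ge 0}$ yielding \eqref{eqn7} with $c^r_i=\mu(x^r_{i,2})$ and $d^r_i\ge 0$, plus the finite-difference condition $\sum_{p=0}^{3}(-1)^p\binom{3}{p}h_{p+1}(x_r)=0$), followed by expanding $h_{p+1}(x_r)$ via the parity map $\Phi_2$ into exactly the two linear equations the paper displays and solving them, with the converse obtained by reversing the substitutions; your observation that $t$ parametrizes the null direction of the homogeneous system, pinning down only $\mu(x_2)-\mu(x_1)=3a+b$, matches the rank-$(\kappa-1)$ structure exploited in Theorem \ref{Tm1}. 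Like the paper, you leave the final coefficient-collection implicit, so the argument is correct in outline and in method, differing from the printed proof only in presentation.
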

  
  \begin{proof}
  	Let $\kappa=2$. The boundedness of $C_\phi$ follows from the essential boundedness of the Radon–Nikodym derivative $h$, which holds due to the existence of a constant $M>0$ satisfying equation \ref{eqn6}.
  	Then $C_\phi\in B({L^{2}(\mu)})$ is $1$-quasi-$3$-isometry  if and only if 
  	$ \{\mu(x^{r}_{i,j+2})\}_{j=0}^\infty $ is a polynomial in $j$ of degree at most 1, for all $r\in J_{[1,\kappa]}  $, $i\in J_{[1,\eta_r]} $ and  
  	\begin{equation}\label{eqn8}
  			\sum_{p=0}^{3} (-1)^p \begin{pmatrix} m \\
  			p \end{pmatrix} h_{p+1}(x_r)= 0,  ~~ \textrm{for all}~~r \in J_{[1,\kappa]}  .
  	\end{equation}
  	That is, there is a system of polynomials $\{ w^r_{i,j}\}_{i=1}^{\eta_{r}}, ~~r\in J_{[1,\kappa]} $ of degree atmost 1  such that 
  	 $$ \mu (x_{i, j+1}^r)= w^r_{i,j+1}= c^r_i +d^r_i(j-1), ~~ r\in J_{[1,\kappa]} , i\in J_{[1,\eta_r]}  , j\in \mathbb{N},$$
  	where $ \{ c^r_i\}_{i=1}^{\eta_{r}} \subset (0,\infty)$,                  $ \{d^r_i\}_{i=1}^{\eta_{r}} \subset \mathbb{R}_{+} $ and  
  	$c^{(r)}= \sum_{i=1}^{\eta_{r}}c^r_i < \infty, ~~d^{(r)}= \sum_{i=1}^{\eta_{r}}d^r_i < \infty.$ and 
  	\begin{align*}
  	-4\mu(x_1)+4\mu(x_2)+3\sum_{i=1}^{\eta_1}\mu(x^1_{i,1})-4\sum_{i=1}^{\eta_1}\mu(x^1_{i,2})+3\sum_{i=1}^{\eta_1}\mu(x^1_{i,3})-\sum_{i=1}^{\eta_1}\mu(x^1_{i,4})\\
  	-4\sum_{i=1}^{\eta_2}\mu(x^2_{i,1})+3\sum_{i=1}^{\eta_2}\mu(x^2_{i,2})-\sum_{i=1}^{\eta_2}\mu(x^2_{i,3}) = 0,
  	\end{align*}
 
  	\begin{align*}
  		4\mu(x_1)-4\mu(x_2)-4\sum_{i=1}^{\eta_1}\mu(x^1_{i,1})+3\sum_{i=1}^{\eta_1}\mu(x^1_{i,2})-\sum_{i=1}^{\eta_1}\mu(x^1_{i,3})+3\sum_{i=1}^{\eta_2}\mu(x^2_{i,1})\\
  		-4\sum_{i=1}^{\eta_2}\mu(x^2_{i,2})+3\sum_{i=1}^{\eta_2}\mu(x^2_{i,3})-\sum_{i=1}^{\eta_2}\mu(x^2_{i,4}) = 0.
  	\end{align*}  
By solving the above equations using the polynomial representation of 
$\{\mu(x^{r}_{i,j+2})\}_{j=0}^{\infty}$, we obtain,

$C_\phi\in B({L^{2}(\mu)})$ is $1$-quasi-$3$-isometry  if and only if
 $$ \sum_{i=1}^{\eta_1}\mu (x_{i, 1}^1)+ \sum_{i=1}^{\eta_2}\mu (x_{i, 1}^2)= \sum_{r=1}^{2}c^{(r)},$$ and
$$\mu(x_r)= W(r), r\in J_{[1, \kappa]}, $$
where $W(x)=ax^2+bx+t$ is polynomial of degree at most two  such that
$$3a+b =\frac{7\sum_{i=1}^{\eta_1} \mu(x^1_{i,1})-5c^{(1)}+d^{(1)}-2c^{(2)}-d^{(2)}}{2\kappa}.$$ 

  \end{proof}
  \begin{theorem}\label{Tm3}
  Let $\kappa = 3$. Assume that $\eta_{r}$, $X$, $\mathcal{F}$, and $\phi$ are defined as in \eqref{equ1}. Then, the operator $C_\phi \in B(L^{2}(\mu))$ is a $1$-quasi-$3$-isometry if and only if there exist $M, t \in (0, \infty)$, along with a family of polynomials$\{ w^r_{i,j}\}_{i=1}^{\eta_{r}}$ of degree at most one, and two corresponding sequences $ \{ c^r_i\}_{i=1}^{\eta_{r}} \subset (0,\infty)$ and $ \{d^r_i\}_{i=1}^{\eta_{r}} \subset \mathbb{R}_{+} $ , that together satisfy conditions \ref{eqn6} and \ref{eqn7} for $\kappa = 3$ and
   $$\mu(x_r)= W(r)>0, r\in J_{[1, \kappa]}, $$
  where $W(x)=ax^2+bx+t, ~t>0$ is polynomial of degree atmost 2 such that \\
  
  $ a= \frac{6\sum_{i=1}^{\eta_1} \mu(x^1_{i,1})-3\sum_{i=1}^{\eta_2} \mu(x^2_{i,1})-3\sum_{i=1}^{\eta_3} \mu(x^3_{i,1})-3c^{(1)}+d^{(1)}+d^{(2)}+3c^{(3)}-2d^{(3)}}{2\kappa}, $
  
$ b= \frac{-24\sum_{i=1}^{\eta_1} \mu(x^1_{i,1})+9\sum_{i=1}^{\eta_2} \mu(x^2_{i,1})+15\sum_{i=1}^{\eta_3} \mu(x^3_{i,1})+11c^{(1)}-3d^{(1)}
  		+2c^{(2)}-5d^{(2)}-13c^{(3)}+8d^{(3)}}{2\kappa}. $
  
  \end{theorem}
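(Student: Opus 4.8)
The proof follows the template of Theorem \ref{Tm2}, now with circuit length $\kappa=3$; the plan is to reduce the $1$-quasi-$3$-isometry condition to a linear system in the vertex masses $\mu(x_1),\mu(x_2),\mu(x_3)$ and to solve it explicitly. First I would record that condition \ref{eqn6} forces the Radon--Nikodym derivative $h$ to be essentially bounded, so $C_\phi\in B(L^2(\mu))$, exactly as in Theorem \ref{Tm2}. Invoking the characterization of $k$-quasi-$m$-isometric composition operators from \cite[Theorem 2.6]{DSP} with $k=1$, $m=3$, the operator $C_\phi$ is a $1$-quasi-$3$-isometry if and only if (a) for every $r\in J_{[1,3]}$ and $i\in J_{[1,\eta_r]}$ the sequence $\{\mu(x^r_{i,j+2})\}_{j=0}^\infty$ is a polynomial in $j$ of degree at most $1$, and (b) $\sum_{p=0}^{3}(-1)^p\binom{3}{p}h_{p+1}(x_r)=0$ for every $r\in J_{[1,3]}$. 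Condition (a) is precisely the assertion that $\mu(x^r_{i,j+1})$ has the affine form \ref{eqn7}, with $c^r_i$ and $d^r_i$ the intercept and slope; this identifies the families $\{w^r_{i,j}\}$, $\{c^r_i\}$, $\{d^r_i\}$.

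The substance is in condition (b). Multiplying through by $\mu(x_r)$ and inserting the explicit formula for $h_q(x_r)$ splits each equation into a vertex part $\sum_{p=0}^3(-1)^p\binom{3}{p}\mu(x_{\Phi_2(p+1+r)})$ and a branch part collecting the masses $\mu(x^s_{i,j})$. Here I would exploit the $\kappa=3$ periodicity of $\Phi_2$: since $\Phi_2(p+1+r)$ depends only on $p+1+r$ modulo $3$, the terms $p=0$ and $p=3$ carry the same vertex index with opposite binomial signs and therefore cancel, leaving the vertex part equal to $3\big(\mu(x_{\Phi_2(3+r)})-\mu(x_{\Phi_2(2+r)})\big)$. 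Evaluating at $r=1,2,3$ yields the three equations $3(\mu(x_1)-\mu(x_3))=b_1$, $3(\mu(x_2)-\mu(x_1))=b_2$, $3(\mu(x_3)-\mu(x_2))=b_3$, where each $b_r$ is the negated branch part. The affine form \ref{eqn7} lets me evaluate every mass $\mu(x^s_{i,j})$ occurring in $b_r$ (only $j\le 4$ appears), so that each $b_r$ becomes an explicit linear combination of the initial masses $\sum_{i}\mu(x^s_{i,1})$ and the aggregates $c^{(s)}=\sum_i c^s_i$, $d^{(s)}=\sum_i d^s_i$.

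Because the three vertex parts sum to $0$, consistency requires $b_1+b_2+b_3=0$; the crucial structural point, which I would verify by direct expansion, is that the cyclic pattern of the constraint $\Phi_2(p+1+r)=\Phi_2(s+j)$ makes $b_1+b_2+b_3$ vanish identically in the branch data. Hence, unlike the $\kappa=2$ case of Theorem \ref{Tm2}, no extra compatibility relation among the initial masses is needed, and the system has rank $2$. Writing $\mu(x_r)=W(r)$ for the unique quadratic $W(x)=ax^2+bx+t$ interpolating the three masses (the constant $t=W(0)$ being the free parameter corresponding to the one-dimensional null space of constant shifts $\mu(x_r)\mapsto\mu(x_r)+t$), I would solve the $r=2$ and $r=3$ equations, namely $9a+3b=b_2$ and $15a+3b=b_3$, to get $a=(b_3-b_2)/6$ and $b=(5b_2-3b_3)/6$. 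Substituting the branch-data expressions for $b_2,b_3$ and dividing by $2\kappa=6$ produces exactly the stated formulas for $a$ and $b$.

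For the converse, given data satisfying \ref{eqn6}, \ref{eqn7} and $\mu(x_r)=W(r)>0$ with these $a,b$, I would define $\mu$ accordingly and run the computation backwards: \ref{eqn7} gives (a), the chosen $a,b$ force the $r=2,3$ vertex equations, and the identity $b_1+b_2+b_3=0$ then forces the remaining $r=1$ equation, so (b) holds and $C_\phi$ is a $1$-quasi-$3$-isometry; the positivity $W(r)>0$, achievable for all large $t$, guarantees that $\mu$ is a genuine positive measure. The main obstacle is the bookkeeping in step (b): correctly enumerating, for each $r$ and each $p\in\{0,1,2,3\}$, the pairs $(s,j)$ with $1\le j\le p+1$ satisfying $\Phi_2(p+1+r)=\Phi_2(s+j)$, and then tracking the resulting coefficients of $\sum_i\mu(x^s_{i,1})$, $c^{(s)}$ and $d^{(s)}$ through to the closed forms; the telescoping that yields $b_1+b_2+b_3\equiv 0$ is the one genuinely delicate verification.
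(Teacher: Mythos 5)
Your proposal is correct and takes essentially the same route as the paper: it reduces, via the characterization in \cite{DSP}, to the affine branch condition \eqref{eqn7} plus the three linear equations in $\mu(x_1),\mu(x_2),\mu(x_3)$ (your vertex parts $3(\mu(x_{\Phi_2(3+r)})-\mu(x_{\Phi_2(2+r)}))=b_r$ are exactly the paper's displayed system), and your solution $a=(b_3-b_2)/6$, $b=(5b_2-3b_3)/6$ reproduces the paper's formulas verbatim with $2\kappa=6$. The only difference is expository: you make explicit what the paper leaves implicit, namely the $p=0,3$ cancellation from the mod-$\kappa$ periodicity of $\Phi_2$, the identity $b_1+b_2+b_3=0$ giving rank $2$, and the constant null vector accounting for the free parameter $t$.
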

  \begin{proof}
 Let $\kappa=3$. Following the reasoning in the proof of Theorem \ref*{Tm2} it can be concluded that $C_\phi \in B(L^{2}(\mu))$. Moreover, the operator $C_\phi$ is a $1$-quasi-$3$-isometry if and only if the following relation holds:
    $$ \mu (x_{i, j+1}^r)= w^r_{i,j+1}= c^r_i +d^r_i(j-1), ~~ r\in J_{[1, \kappa]}, i\in J_{[1,\eta_{r}]} , j\in \mathbb{N},$$
where the coefficient sets $ \{ c^r_i\}_{i=1}^{\eta_{r}} \subset (0,\infty)$,                 
 $ \{d^r_i\}_{i=1}^{\eta_{r}} \subset \mathbb{R}_{+} $ satisfy the summability conditions 
 $$c^{(r)}= \sum_{i=1}^{\eta_{r}}c^r_i < \infty, ~~d^{(r)}= \sum_{i=1}^{\eta_{r}}d^r_i < \infty.$$
Furthermore, the following identity must also be satisfied:
 	\begin{equation}\label{eqn9}
 	\sum_{p=0}^{3} (-1)^p \begin{pmatrix} m \\
 		p \end{pmatrix} h_{p+1}(x_r)= 0,  ~~ \textrm{for all}~~r \in J_{[1, \kappa]} .
 \end{equation}
Then  \ref{eqn9} becomes a sysytem of homogeneous linear equations of the form 
\begin{align*}
	3\mu(x_1)-3\mu(x_3)-3\sum_{i=1}^{\eta_2} \mu(x^2_{i,1})+3\sum_{i=1}^{\eta_3} \mu(x^3_{i,1})-c^{(1)}+d^{(1)}
	+2c^{(2)}-d^{(2)}-c^{(3)}=0\\
	-3\mu(x_1)+3\mu(x_2)+3\sum_{i=1}^{\eta_1} \mu(x^1_{i,1})-3\sum_{i=1}^{\eta_3} \mu(x^3_{i,1})-c^{(1)}
	-c^{(2)}+d^{(2)}+2c^{(3)}-d^{(3)}=0\\
	-3\mu(x_2)+3\mu(x_3)-3\sum_{i=1}^{\eta_1} \mu(x^1_{i,1})+3\sum_{i=1}^{\eta_2} \mu(x^2_{i,1})+2c^{(1)}-d^{(1)}
	-c^{(2)}-c^{(3)}+d^{(3)}=0	
\end{align*}
By using  the above system of equations we obtain $C_\phi$ is 1-quasi-3-isometry if and only if there exist $t>0$ and  a polynomial $W(x)=ax^2+bx+t$ of degree atmost 2 such that $\mu(x_r)=W(r)>0, r\in J_{[1, \kappa]}$, where \\

 $ a= \frac{6\sum_{i=1}^{\eta_1} \mu(x^1_{i,1})-3\sum_{i=1}^{\eta_2} \mu(x^2_{i,1})-3\sum_{i=1}^{\eta_3} \mu(x^3_{i,1})-3c^{(1)}+d^{(1)}+d^{(2)}+3c^{(3)}-2d^{(3)}}{2\kappa}, $\\\\
 and\\

$ b= \frac{-24\sum_{i=1}^{\eta_1} \mu(x^1_{i,1})+9\sum_{i=1}^{\eta_2} \mu(x^2_{i,1})+15\sum_{i=1}^{\eta_3} \mu(x^3_{i,1})+11c^{(1)}-3d^{(1)}
	+2c^{(2)}-5d^{(2)}-13c^{(3)}+8d^{(3)}}{2\kappa}. $

  \end{proof}
  
  \begin{theorem}
  
  Let $\kappa = 4$, and assume that the parameters $\eta_r$, the measurable space
  $(X,\mathcal{F})$, and the mapping $\phi$ are defined as in \eqref{equ1}. The
  composition operator $C_\phi \in B(L^2(\mu))$ is a $1$-quasi-$3$-isometry if and only
  if there exist constants $M,t \in (0,\infty)$ such that, for each
  $r \in J_{[1,\kappa]}$, there is a family of polynomials $\{w_i^r\}_{i=1}^{\eta_r}$ of
  degree at most one, together with sequences
  $\{c_i^r\}_{i=1}^{\eta_r} \subset (0,\infty)$ and
  $\{d_i^r\}_{i=1}^{\eta_r} \subset \mathbb{R}_+$ satisfy conditions \ref{eqn6} and \ref{eqn7}
  corresponding to the case $\kappa = 4$ and
  
  \begin{equation*}
  	-3\sum_{i=1}^{\eta_1}\mu (x_{i, 1}^1)+ 3\sum_{i=1}^{\eta_2}\mu (x_{i, 1}^2)-\sum_{i=1}^{\eta_3}\mu (x_{i, 1}^3)+\sum_{i=1}^{\eta_4}\mu (x_{i, 1}^4)= -2c^{(1)}+d^{(1)}+c^{(2)}+c^{(4)}-d^{(4)}
  \end{equation*}
 \begin{equation*}
 	\mu(x_r)= W(r)>0, r\in J_{[1, \kappa]}, 
 \end{equation*}
  Where $W(x)=ax^2+bx+t, ~t>0$ is polynomial of degree atmost 2 such that\\\\
$ a= \frac{-\sum_{i=1}^{\eta_1} \mu(x^1_{i,1})+5\sum_{i=1}^{\eta_2} \mu(x^2_{i,1})-3\sum_{i=1}^{\eta_3} \mu(x^3_{i,1})-\sum_{i=1}^{\eta_4} \mu(x^4_{i,1})+[2c^{(1)}-2d^{(1)}
	-3c^{(2)}+d^{(2)}+d^{(3)}+c^{(4)}]}{2\kappa}.$ \\\\
$ b= \frac{2\sum_{i=1}^{\eta_1} \mu(x^1_{i,1})-12\sum_{i=1}^{\eta_2} \mu(x^2_{i,1})+6\sum_{i=1}^{\eta_3} \mu(x^3_{i,1})+4\sum_{i=1}^{\eta_4} \mu(x^4_{i,1})+[-5c^{(1)}+5d^{(1)}
	+7c^{(2)}-2d^{(2)}+c^{(3)}-3d^{(3)}-3c^{(4)}]}{2\kappa}.$

  \end{theorem}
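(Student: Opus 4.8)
The plan is to follow the template established in the proofs of Theorems \ref{Tm2} and \ref{Tm3}, now specialized to $\kappa=4$. First I would record that the essential boundedness of the Radon--Nikodym derivative $h$, guaranteed by \eqref{eqn6}, places $C_\phi$ in $B(L^2(\mu))$, so that the criterion of \cite[Theorem 2.6]{DSP} applies: with $m=3$ and $k=1$, the operator $C_\phi$ is a $1$-quasi-$3$-isometry if and only if each branch sequence $\{\mu(x^r_{i,j+2})\}_{j=0}^\infty$ is a polynomial in $j$ of degree at most $m-2=1$ and the vertex identity \eqref{eqn9}, namely $\sum_{p=0}^{3}(-1)^p\binom{3}{p}h_{p+1}(x_r)=0$, holds for every $r\in J_{[1,\kappa]}$. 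The degree condition is precisely the affine form \eqref{eqn7}, which introduces the data $c^r_i,d^r_i$ together with their finite sums $c^{(r)},d^{(r)}$, while leaving the first-column totals $\sum_i\mu(x^r_{i,1})$ free.

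Next I would expand \eqref{eqn9} at each vertex. Clearing the common denominator $\mu(x_r)$ and inserting the explicit formula for $h_{p+1}(x_r)$ turns the identity into $\sum_{p=0}^{3}(-1)^p\binom{3}{p}\big(\mu(x_{\Phi_2(p+1+r)})+\sum_{j=1}^{p+1}\sum_{s:\,\Phi_2(p+1+r)=\Phi_2(s+j)}\sum_i\mu(x^s_{i,j})\big)=0$. Evaluating the residue condition $\Phi_2(p+1+r)=\Phi_2(s+j)$ for $\kappa=4$ --- where the four indices $p=0,1,2,3$ sweep the four vertices without repetition, in contrast to the wrap-around that occurs for $\kappa=3$ --- produces, after substituting \eqref{eqn7} for the terms with $j\ge 2$, four homogeneous linear equations in the unknowns $\mu(x_1),\dots,\mu(x_4)$ whose right-hand sides are exactly the combinations of $\sum_i\mu(x^r_{i,1})$, $c^{(r)}$ and $d^{(r)}$ that appear in the statement.

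I would then solve this system. Its coefficient matrix is the circulant generated by $(-1,1,-3,3)$; it has rank $\kappa-1=3$, with the all-ones vector spanning both its kernel and its cokernel, so the compatibility condition (the sum of the four equations) is satisfied identically and the solution set is a one-parameter family $\mu(x_r)=\mu_0(r)+t$. To realise these four values through a single quadratic $W(x)=ax^2+bx+t$, as in the cases $\kappa=2,3$, the third finite difference $\mu(x_4)-3\mu(x_3)+3\mu(x_2)-\mu(x_1)$ must vanish; this difference is precisely the branch combination furnished by the $r=\kappa$ equation, and equating it to zero yields the displayed constraint. The three remaining equations, now consistent, determine $a$ and $b$ uniquely, giving the stated expressions, while $t$ remains free; imposing $W(r)>0$ for $r\in J_{[1,\kappa]}$ secures positivity of $\mu$ on the branching vertices and \eqref{eqn6} its boundedness, which closes the equivalence.

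The main obstacle I anticipate is the bookkeeping in the second step: correctly resolving, for each pair $(p,r)$, which indices $(s,j)$ satisfy $\Phi_2(p+1+r)=\Phi_2(s+j)$, and then collecting the coefficients of each $c^{(r)},d^{(r)}$ and $\sum_i\mu(x^r_{i,1})$ across the four values of $p$ without sign errors. The conceptual crux that makes the quadratic representation work is that for $\kappa=4$ the vertex-coefficient pattern $(1,-3,3,-1)$ at $r=\kappa$ is exactly the third-difference operator, which annihilates quadratics; this is what isolates the single extra constraint that is absent when $\kappa=3$ (three points always lie on a quadratic) and that takes the place of the compatibility-type relation seen when $\kappa=2$.
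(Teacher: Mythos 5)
Your proposal is correct and follows essentially the same route as the paper's proof: boundedness via \eqref{eqn6}, the criterion of \cite[Theorem 2.6]{DSP} reducing the problem to the affine branch condition \eqref{eqn7} together with the four vertex identities $\sum_{p=0}^{3}(-1)^p\binom{3}{p}h_{p+1}(x_r)=0$, and then solving the resulting linear system to obtain the quadratic $W$ with the stated $a$, $b$ and the extra constraint. Your added structural analysis --- the rank-$3$ circulant generated by $(-1,1,-3,3)$ with the all-ones vector spanning kernel and cokernel, and the identification of the $(1,-3,3,-1)$ pattern as the third-difference operator that forces the displayed constraint (absent for $\kappa\le 3$) --- merely makes explicit the step the paper compresses into ``by solving the preceding system of linear equations,'' and it checks out against the paper's four equations.
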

  \begin{proof}
  Let $\kappa=4$. By the similar proof of Theorem \ref{Tm3}, $C_\phi \in B(L^{2}(\mu))$ is a $1$-quasi-$3$-isometry if and only if 
 \begin{align*}
 	\begin{split}
 		-\mu(x_1)+\mu(x_2)-3\mu(x_3)+3\mu(x_4)+\sum_{i=1}^{\eta_1} \mu(x^1_{i,1})-3\sum_{i=1}^{\eta_2} \mu(x^2_{i,1})+
 		3\sum_{i=1}^{\eta_3} \mu(x^3_{i,1})-\\
 		\sum_{i=1}^{\eta_4} \mu(x^4_{i,1})-
 	c^{(1)}+d^{(1)}	+2c^{(2)}-d^{(2)}-c^{(3)}=0
 	\end{split}
  \end{align*} 
\begin{align*}
	\begin{split}
	3\mu(x_1)-\mu(x_2)+\mu(x_3)-3\mu(x_4)-\sum_{i=1}^{\eta_1} \mu(x^1_{i,1})+\sum_{i=1}^{\eta_2} \mu(x^2_{i,1})-
	3\sum_{i=1}^{\eta_3} \mu(x^3_{i,1})+\\
	3\sum_{i=1}^{\eta_4} \mu(x^4_{i,1})-
	c^{(2)}+d^{(2)}	+2c^{(3)}-d^{(3)}-c^{(4)}=0
	\end{split}
\end{align*}
\begin{align*}
	\begin{split}
		-3\mu(x_1)+3\mu(x_2)-\mu(x_3)+\mu(x_4)+3\sum_{i=1}^{\eta_1} \mu(x^1_{i,1})-\sum_{i=1}^{\eta_2} \mu(x^2_{i,1})+
		\sum_{i=1}^{\eta_3} \mu(x^3_{i,1})-\\
		3\sum_{i=1}^{\eta_4} \mu(x^4_{i,1})-
	c^{(1)}-c^{(3)}	+d^{(3)}+2c^{(4)}-d^{(4)}=0
	\end{split}
\end{align*}
 	\begin{align*}
 	\begin{split}
 	\mu(x_1)-3\mu(x_2)+3\mu(x_3)-\mu(x_4)-3\sum_{i=1}^{\eta_1} \mu(x^1_{i,1})+3\sum_{i=1}^{\eta_2} \mu(x^2_{i,1})-\sum_{i=1}^{\eta_3} \mu(x^3_{i,1})+\\
 	\sum_{i=1}^{\eta_4} \mu(x^4_{i,1})+
 	2c^{(1)}-d^{(1)}-c^{(2)}-c^{(4)}+d^{(4)}=0.
 	\end{split}
 	\end{align*}
 	
  By solving the preceding system of linear equations, it follows that there exists a  polynomial $W(x)=ax^2+bx+t$ with $t>0$ of degree not exceeding two and values of $a$ and $b$ in ,  satisfies the necessary and sufficient condition for the operator $C_\phi$ to be a 1-quasi-3-isometry.
  \end{proof} 
 Next we study the completion problem for  $k$-quasi-$m$-isometric composition operators acting on directed graphs that contain a single circuit element; the problem reduces, in essence, to the  completion problem for unilateral weighted shifts in Proposition \ref{p1}.
 
 \begin{theorem}
 Let $m\in\mathbb{N}$, $k\in \mathbb{Z^+}$ such that $m\geq2$ and assume that $\{ b_n \}_{n=1}^m\subset (0, \infty).$ Then there exists a discrete measure space $(X, \mathcal{A}, \mu)$ together with a measurable transformation $\phi$:X$\rightarrow X$ satisfying condition \eqref{equ1} with $\kappa = 1$ and $\eta=1$ such that $\mu(x_{1,n})=b_n, ~ n\in J_{[1, m]}$. Under these conditions, the associated composition operator $C_\phi$ acts as a $k$-quasi-$(m+2)$-isometry. Furthermore, when 
 $m=1$, this construction yields the existence of a $k$-quasi-2-isometric composition operator defined in an analogous manner.
 \end{theorem}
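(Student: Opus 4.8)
The plan is to specialize the characterization of $k$-quasi-$M$-isometric composition operators used in the proof of Theorem~\ref{Tm1} (namely \cite[Theorem 2.6]{DSP}) to the degenerate graph with $\kappa=1$ and $\eta=1$, and to observe that in this case the circuit equation at $x_1$ is \emph{automatically} satisfied, so that the whole problem collapses to the weighted-shift completion of Proposition~\ref{p1}. For $\kappa=1$ one has $\Phi_2\equiv 1$, the vertex $x_1$ is a fixed point of $\phi$, and there is a single ray $x_1\to x_{1,1}\to x_{1,2}\to\cdots$. The cited characterization then reads: $C_\phi$ is a $k$-quasi-$(m+2)$-isometry if and only if $\{\mu(x_{1,k+j+1})\}_{j=0}^\infty$ is a polynomial in $j$ of degree at most $m=(m+2)-2$, together with the single scalar identity $\sum_{p=0}^{m+2}(-1)^p\binom{m+2}{p}h_{p+k}(x_1)=0$.

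For the construction I would apply Lemma~\ref{l1} with $l=m-1$ to the data $\{b_{n+1}\}_{n=0}^{m-1}$: this yields $c>0$ so that, for any fixed $t\ge c$, there is a polynomial $w_t$ of degree $m$ with $w_t(n-1)=b_n$ for $n\in J_{[1,m]}$, $w_t(m)=t$, and $w_t(s)>0$ for all $s\ge m+1$. I then set $\mu(x_{1,n})=w_t(n-1)$ for every $n\in\mathbb{N}$ and let $\mu(x_1)$ be any positive number; since $b_n>0$, $t>0$, and $w_t$ is positive beyond $m$, the measure is strictly positive on every atom and requirement (i) on the prescribed values is built in. Boundedness of $C_\phi$ is immediate, because $h(x_{1,j})=w_t(j)/w_t(j-1)\to 1$ as $j\to\infty$ and $h(x_1)=(\mu(x_1)+b_1)/\mu(x_1)<\infty$, so $h\in L^\infty(\mu)$.

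It then remains to verify the two conditions of the characterization for $M=m+2$. The polynomial condition is immediate, since $\{\mu(x_{1,k+j+1})\}_j=\{w_t(k+j)\}_j$ has degree exactly $m$ in $j$. For the circuit identity, the $\kappa=1$ form of the $h_p$-formula gives $h_{p+k}(x_1)=1+\mu(x_1)^{-1}\sigma_{p+k}$, where $\sigma_n:=\sum_{j=1}^{n}\mu(x_{1,j})$. Substituting and using $\sum_{p=0}^{m+2}(-1)^p\binom{m+2}{p}=(1-1)^{m+2}=0$, the constant contribution cancels and the identity reduces to $\sum_{p=0}^{m+2}(-1)^p\binom{m+2}{p}\sigma_{p+k}=0$. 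This is, up to sign, the $(m+2)$-th forward difference of the sequence $p\mapsto\sigma_{p+k}$; but $\sigma_n=\sum_{j=1}^{n}w_t(j-1)$ is a polynomial in $n$ of degree $m+1$, whose $(m+2)$-th difference vanishes identically. Hence the circuit identity holds for every positive $\mu(x_1)$, and $C_\phi$ is a $k$-quasi-$(m+2)$-isometry. This finite-difference cancellation, together with the fact that partial summation raises the degree by exactly one, is precisely what forces the order $m+2$, and it is the conceptual heart of the argument.

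Finally, the case $m=1$ lies outside the hypothesis $l\in\mathbb{Z}^+$ of Lemma~\ref{l1}, so I would treat it directly: take $\mu(x_{1,n})=b_1$ for all $n$, a constant (degree-$0$) ray which is automatically positive. Then $\{\mu(x_{1,k+j+1})\}_j$ is constant of degree $0=2-2$, while $\sigma_n=nb_1$ has degree $1$, so the same computation gives $\sum_{p=0}^{2}(-1)^p\binom{2}{p}\sigma_{p+k}=0$, and $C_\phi$ is a $k$-quasi-$2$-isometry. The obstacles I anticipate are essentially bookkeeping: confirming that the $\kappa=1$ specialization of the parent map and of $h_p$ indeed produces the stated single-ray formulas, and tracking the degrees so that the binomial identity and the finite-difference annihilation align to give order $m+2$ (respectively $2$) rather than an off-by-one value. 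Positivity of the completed ray is supplied entirely by Lemma~\ref{l1} when $m\ge 2$ and is trivial when $m=1$.
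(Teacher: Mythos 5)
Your proposal is correct and follows essentially the same route as the paper: both constructions obtain the ray measure from the degree-$m$ polynomial supplied by Lemma \ref{l1}, then apply the characterization of $k$-quasi-$(m+2)$-isometric composition operators from \cite{DSP} on the $\kappa=\eta=1$ graph, with the $m=1$ case handled separately by a constant ray. The only differences are minor: the paper leaves $\mu(x^1_{1,j})=b_j$ arbitrary for $j\in J_{[1,k]}$ and imposes the polynomial only on the tail, citing \cite[Corollary 2.7]{DSP} to absorb the circuit condition at $x_1$, whereas you make the entire ray polynomial and re-derive that corollary's content yourself — correctly — by noting that $h_{p+k}(x_1)=1+\mu(x_1)^{-1}\sigma_{p+k}$ with $\sigma_n$ a polynomial of degree $m+1$, whose $(m+2)$-th forward difference vanishes, which is a harmless and in fact more self-contained specialization.
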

\begin{proof}
Assume that $\{ b_n \}_{n=1}^m\subset (0, \infty).$ Given that $m\in\mathbb{N}$, $k\in \mathbb{Z^+}$ such that $m\geq2$. Now choose  $\{ b_n \}_{n=m+1}^{k+m}\subset (0, \infty)$ in such a way that there exists a polynomial $ w(x) \in \mathbb{R}[x]$ of degree $ m $ such that
$w(n) = b_{k+n}, \text{for}~ n \in J_{[1, m]},$
 and $w(n)>0, \text{for}~ n \in J_{[m+1, \infty]}$. Assume that $\kappa = 1$ and $\eta=1$,  $X=\{x_1\}\cup \{x^1_{1,j}: j\in\mathbb{N}\}$, $\mathcal{A}=P(X)$, and $\mu(x^1_{1,k+j})=w(j), j\in\mathbb{N}$,  $\mu(x^1_{1,j})=b_j, j \in J_{[1, k]}$, and $\phi(x)= Par(x), x\in X$, then $\{\mu(x^1_{1,j+k+1})\}_{j=0}^\infty$ is polynomial in $j$ of degree $m$. Then by \cite[Corollary 2.7]{DSP}, $C_\phi$ is $k$- quasi -$(m+2)$-isometric composition operator and it is the $k$- quasi -$(m+2)$-isometric  completion of the initial sequuence $\{ b_n \}_{n=1}^m\subset (0, \infty).$
 Next assume that $m=1$. Then we can choose arbitrary constant sequence $\{ b_n \}_{n=2}^\infty\subset (0, \infty)$ and a constant real polynomial $w(x)$ such that $w(j)=b_{k+j}, j\in \mathbb{N}$. As in the previous case, there exist a positive measure $\mu$ satisfies that $\mu(x^1_{1,k+j})= w(j), j\in \mathbb{N}$. So,  $C_\phi$ is a $k$- quasi -$2$-isometric  completion of the initial sequuence $\{ b_1 \}\subset (0, \infty).$
\end{proof}

\section{$k$-quasi-$m$-isometry completion problem for Weighted composition operators.}
Let $\pi\in L^{\infty}(\mu)$ and let $\phi$ be a nonsingular measurable map as
described in (\ref{equ1}).  
For each $p\in\mathbb{N}$, define
$$
\pi^{2}_{p}(x)
= \pi^{2}(x)\, (\pi\circ\phi)^{2}(x)\,(\pi\circ\phi^{2})^{2}(x)\cdots
(\pi\circ\phi^{p-1})^{2}(x).
$$

When $p\ge \kappa$, the conditional expectation of $|\pi_{p}|^{2}$ 
 is determined by the atoms of the $\sigma$-subalgebra $\phi^{-p}(\mathcal{F})$ of $\mathcal{F}$.
Thus,
$$
E_{p}(|\pi_{p}|^{2})(x)=
\begin{cases}
	K^{r}_{i,j+p}, & \text{if } x=x^{r}_{i,j+p}, r\in  J_{[1, \kappa]}, i\in  J_{[1, \eta_{r}]},
\ j\in\mathbb{N}, \\[10pt]
K^{r}_{p}, & \text{if } x=x^{s}_{i,j}\ \text{for some } s\in J_{[1, \kappa]},
\ \Phi_{2}(p+r)=\Phi_{2}(s+j),\ j\in J_{[1, p]},\\[2pt]
& \ i\in J_{[1,\eta_{s}]}, \quad \text{or if } x=x_{\Phi_{2}(p+r)} .
\end{cases}
$$
Here,
$$
K^{r}_{i,j+p}=|\pi_{p}|^{2}(x^{r}_{i,j+p}),
$$
and
$$
K^{r}_{p}=
\frac{
	|\pi_{p}|^{2}(x_{\Phi_{2}(p+r)})\,\mu(x_{\Phi_{2}(p+r)})
	+ \displaystyle\sum_{j=1}^{p}
	\sum_{\substack{s=1\\\Phi_{2}(p+r)=\Phi_{2}(s+j)}}^{\kappa}
	\sum_{i=1}^{\eta_{s}}
	|\pi_{p}|^{2}(x^{s}_{i,j})\,\mu(x^{s}_{i,j})}
{\mu(x_{\Phi_{2}(p+r)})
	+ \displaystyle\sum_{j=1}^{p}
	\sum_{\substack{s=1\\\Phi_{2}(p+r)=\Phi_{2}(s+j)}}^{\kappa}
	\sum_{i=1}^{\eta_{s}}
	\mu(x^{s}_{i,j})}.
$$
Since $E_{p}(|\pi_{p}|^{2})$ is $\phi^{-p}(\mathcal{F})$-measurable, there exists an
$\mathcal{F}$-measurable function $F_{p}$ on $X$ such that
$$
E_{p}(|\pi_{p}|^{2}) = F_{p}\circ\phi^{p},
$$
where
$$
F_{p}(x)=
\begin{cases}
	K^{r}_{i,j+p}, & \text{if } x=x^{r}_{i,j},r\in  J_{[1, \kappa]}, i\in  J_{[1, \eta_{r}]},\ j\in\mathbb{N}, \\[10pt]
	K^{r}_{p}, & \text{if } x=x_{r},\ r\in J_{[1,\kappa]}.
\end{cases}
$$
Then a weighted composition operator $W$ acting on $L^{2}(\mu)$ which is induced by a
nonsingular measurable map $\phi$ together with an essentially bounded weight
function $\pi$ is said to be a $k$-quasi-$m$-isometry \cite{DSP} if it satisfies the equation
$$
\sum_{p=0}^{m} (-1)^{p}
\binom{m}{p}\, W^{*(p+k)} W^{\,p+k}
= 
\sum_{p=0}^{m} (-1)^{p}
\binom{m}{p}\, h_{p+k} F_{p+k}
= 0.
$$
\begin{lemma}\label{l2} 
  \cite[Theorem 2.13]{DSP} Let $m\geq 2$, $k\in\mathbb{Z_+}$, $\pi$ be a positive esssentially bounded measurable function on $X$ and \eqref{equ1} hold. Then the weighted composition operator  $ W \in \mathcal{B}(L^2(\mu))$ induced by  $\phi$ and $\pi$ is $k$-quasi-$m$-isometry 	 if and only if $  \{\pi_k^2(x^{r}_{i,k+j+1})\mu(x^{r}_{i,k+j+1})\}_{j=0}^\infty $ is a polynomial in $j$ of degree at most $m-1$ for every $r\in  J_{[1, \kappa]}, i\in  J_{[1, \eta_{r}]}$ and  
	$	\Sigma_{p=0}^{m} (-1)^p \begin{pmatrix} m \\
		p \end{pmatrix} h_{p+k}F_{p+k}(x_r)= 0 ~~ \textrm{for all} ~~r \in J_{[1,\kappa]} .$
\end{lemma}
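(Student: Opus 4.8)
The plan is to convert the operator equation defining $k$-quasi-$m$-isometry into a scalar identity between multiplication operators and then test that identity atom by atom. First I would record that for each $n\in\mathbb N$ the operator $W^{*n}W^{n}$ is multiplication by $h_nF_n$: for $f,g\in L^2(\mu)$,
\[
\langle W^{*n}W^{n}f,g\rangle=\int_X|\pi_n|^2\,(f\circ\phi^n)\overline{(g\circ\phi^n)}\,d\mu=\int_X h_nF_n\,f\bar g\,d\mu ,
\]
using in turn that $(f\bar g)\circ\phi^n$ is $\phi^{-n}\mathcal F$-measurable, the defining property of $E_n$, the identity $E_n(|\pi_n|^2)=F_n\circ\phi^n$, and the change of variables $\int(G\circ\phi^n)\,d\mu=\int h_nG\,d\mu$. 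Hence, as already displayed in the excerpt, $W$ is $k$-quasi-$m$-isometric precisely when the function $\gamma:=\sum_{p=0}^m(-1)^p\binom{m}{p}h_{p+k}F_{p+k}$ vanishes $\mu$-almost everywhere. Since $\mu$ is discrete with $\mu(\{x\})>0$ for every $x$, this holds if and only if $\gamma(x)=0$ at each $x\in X$, so it remains to evaluate $\gamma$ on the two families of atoms: the branch vertices $x^r_{i,j}$ and the circuit vertices $x_r$.

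Second, I would evaluate $\gamma$ at a branch vertex $x^r_{i,j}$. Each such vertex has a unique $\phi$-preimage running outward along its branch, so the atom of $\phi^{-(p+k)}\mathcal F$ through $x^r_{i,j+p+k}$ is the singleton $\{x^r_{i,j+p+k}\}$, and the atom formulas give $h_{p+k}(x^r_{i,j})=\mu(x^r_{i,j+p+k})/\mu(x^r_{i,j})$ together with $F_{p+k}(x^r_{i,j})=|\pi_{p+k}|^2(x^r_{i,j+p+k})$. The task is then to reorganize the alternating binomial sum $\sum_{p}(-1)^p\binom{m}{p}|\pi_{p+k}|^2(x^r_{i,j+p+k})\mu(x^r_{i,j+p+k})$, using the cocycle relation $\pi_{p+k}=\pi_k\,(\pi_p\circ\phi^k)$ and the telescoping of the products of $\pi^2$ along the branch, into an $m$-th forward difference of a single position-indexed branch sequence, namely the datum $\{\pi_k^2(x^r_{i,k+j+1})\mu(x^r_{i,k+j+1})\}_{j\ge0}$ recorded in the statement. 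Once this is carried out, the classical equivalence that $\sum_{p=0}^m(-1)^p\binom{m}{p}a_{j+p}=0$ for every $j$ holds if and only if $\{a_j\}$ is the restriction of a polynomial of degree at most $m-1$ yields exactly the first condition of the lemma, for all $r\in J_{[1,\kappa]}$ and $i\in J_{[1,\eta_r]}$.

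Third, I would evaluate $\gamma$ at a circuit vertex $x_r$. Here the preimage atom is no longer a singleton: the atom of $\phi^{-(p+k)}\mathcal F$ through $x_r$ unites the circuit vertex $x_{\Phi_2(p+k+r)}$ with all branch points $x^s_{i,l}$, $1\le l\le p+k$, feeding into it within $p+k$ steps, which is exactly the combinatorics encoded by $\Phi_1,\Phi_2$ in the second branch of the formula for $h_{p+k}$. No telescoping simplification is available, so the requirement $\gamma(x_r)=0$ is retained verbatim as the second asserted condition $\sum_{p=0}^m(-1)^p\binom{m}{p}h_{p+k}F_{p+k}(x_r)=0$ for each $r\in J_{[1,\kappa]}$. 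Because every link in the chain is an equivalence, the two conditions together are equivalent to $W$ being $k$-quasi-$m$-isometric, so both implications are proved at once.

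The step I expect to be the main obstacle is the branch reduction: one must track the telescoping products $\pi_{p+k}(x^r_{i,j+p+k})$ and the singleton atom structure carefully enough that the alternating binomial sum genuinely collapses onto the $m$-th forward difference of one position-indexed sequence, with every $p$-dependent weight factor correctly absorbed. The parallel accounting at the circuit vertices, where $\Phi_1,\Phi_2$ govern how several branches merge into a single atom, is precisely what prevents any analogous collapse there and forces the circuit condition to remain in its unreduced pointwise form.
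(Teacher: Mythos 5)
Your overall architecture is sound and is in fact the only scaffolding this paper supplies: the paper gives no proof of the lemma at all (it is imported verbatim as \cite[Theorem 2.13]{DSP}), but the identity you establish in your first step, namely that each $W^{*n}W^{n}$ is multiplication by $h_{n}F_{n}$, is exactly the display preceding the lemma, and from there testing $\gamma=\sum_{p=0}^{m}(-1)^{p}\binom{m}{p}h_{p+k}F_{p+k}$ atom by atom (legitimate since $\mu$ is purely atomic with positive masses, and $\gamma$ is a multiplication symbol so the off-diagonal polarization terms vanish automatically) is the right reduction. Your third step is also correct: at a circuit vertex the preimage atom under $\phi^{-(p+k)}$ merges the circuit with finitely many branch segments, no collapse occurs, and the condition stays in the unreduced form $\sum_{p=0}^{m}(-1)^{p}\binom{m}{p}h_{p+k}F_{p+k}(x_{r})=0$.

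The genuine gap is precisely the branch-vertex collapse that you flag as the main obstacle but never execute, and as you state it, it fails. At $x^{r}_{i,j}$ the preimage is the singleton $\{x^{r}_{i,j+p+k}\}$, so
\begin{equation*}
h_{p+k}F_{p+k}(x^{r}_{i,j})\,\mu(x^{r}_{i,j})
=\pi^{2}_{p+k}(x^{r}_{i,j+p+k})\,\mu(x^{r}_{i,j+p+k})
=\Bigl(\prod_{l=j+1}^{j+p+k}\pi^{2}(x^{r}_{i,l})\Bigr)\mu(x^{r}_{i,j+p+k}).
\end{equation*}
Multiplying the alternating sum by the $j$-dependent positive constant $\prod_{l=2}^{j}\pi^{2}(x^{r}_{i,l})$ turns it into the $m$-th forward difference, at index $j+k$, of the \emph{cumulative} sequence $e_{n}:=\pi^{2}_{n-1}(x^{r}_{i,n})\,\mu(x^{r}_{i,n})$; hence vanishing at all branch vertices is equivalent to $\{\pi^{2}_{k+j}(x^{r}_{i,k+j+1})\,\mu(x^{r}_{i,k+j+1})\}_{j\geq 0}$ being a polynomial in $j$ of degree at most $m-1$, with a weight window that \emph{grows} with $j$. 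This is not the fixed-window sequence $\{\pi^{2}_{k}(x^{r}_{i,k+j+1})\,\mu(x^{r}_{i,k+j+1})\}_{j\geq 0}$ appearing in the statement: the two differ by the factor $\prod_{l=2}^{j+1}\pi^{2}(x^{r}_{i,l})$, and their polynomiality conditions are inequivalent in general (take $\pi\equiv c\neq 1$ constant along a branch: one sequence is $c^{2k}\mu(x^{r}_{i,k+j+1})$, the other $c^{2(k+j)}\mu(x^{r}_{i,k+j+1})$). The cocycle relation $\pi_{p+k}=\pi_{k}\,(\pi_{p}\circ\phi^{k})$ that you invoke cannot absorb this discrepancy, because $\pi_{k}$ evaluated at $x^{r}_{i,j+p+k}$ is a window whose location depends on $j+p$; the only factor independent of $p$ that can be pulled out still depends on $j$. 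So either the subscript in the quoted statement is a misprint of $\pi^{2}_{k+j}$ for $\pi^{2}_{k}$ (the unweighted specialization used in Theorem \ref{Tm1}, where the sequence is simply $\{\mu(x^{r}_{i,k+j+1})\}$, is consistent with the cumulative reading when $\pi\equiv 1$), or the claimed equivalence requires an argument you have not supplied; in no case does your promised collapse land on the sequence exactly as printed.
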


\begin{theorem}
	Let $m,\kappa,k \in \mathbb{N}$ with $\kappa>m\ge 2$, and let $\pi> 0$ is an essentially bounded measurable function on $X$. Suppose that $\eta_r$, $X$, $\mathcal{F}$, and $\phi$ are as described in \eqref{equ1}. Assume that $M\in (0,\infty)$, and for each $r\in J_\kappa$  a family of polynomials 
	$\{w_i^{\,r}\}_{i=1}^{\eta_r}$ of degree at most $m-1$, together with a sequence 
	$\{a^{\,r}_{i,j}\}_{j=1}^{\infty}\subset (0,\infty)$ such that  
	$$
	w_i^{\,r}(j)=\pi_k^2(x^{r}_{i,k+j})a^{\,r}_{i,k+j},~~ 
	\textrm{for all},~~ r\in  J_{[1, \kappa]}, i\in  J_{[1, \eta_{r}]},\; j\in\mathbb{N}.
	$$
	Moreover, suppose that
	\begin{equation}\label{eqn10}
	\max_{r\in J_\kappa}\Bigg\{	\sum_{i=1}^{\eta_r} \pi^{2}\!\left(x_{r}\right)a^{\,r}_{i,1},\;	\sup_{i\in J_{\eta_r},\, j\in\mathbb{N}} \frac{\pi^{2}\!\left(x^{r}_{i,j}\right)a^{\,r}_{i,j+1}}
	{a^{\,r}_{i,j}}\Bigg\}\leq M .
	\end{equation}

	Then there exists a measure $\mu$ on $\mathcal{F}$ satisfying:
	\begin{enumerate}
		\item $\mu\!\left(x^{r}_{i,j}\right)=a^{\,r}_{i,j}$ for every $ r\in  J_{[1, \kappa]}, i\in  J_{[1, \eta_{r}]}$, and $j\in\mathbb{N}$;
		\item the weighted composition operator $W\in B(L^{2}(\mu))$ generated by  $\phi$ and $\pi$ is a $k$-quasi-$m$-isometry.
	\end{enumerate}
	In addition, there exists $c\in\mathbb{R}$ such that for each $t>c$ there is an  another measure $\mu_t$ having the above properties and it is of the form 
	$$
	\mu_t(x)=
	\begin{cases}
		\mu(x)+t, & \text{if } x=x_r \text{ for some } r\in J_{[1, \kappa]},\\[4pt]
		\mu(x),   & \text{otherwise}.
	\end{cases}
	$$
\end{theorem}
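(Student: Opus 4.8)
The plan is to mirror the proof strategy of Theorem \ref{Tm1}, adapting it to the weighted setting via Lemma \ref{l2}. First I would define the candidate measure by setting $\mu(x^{r}_{i,j}) = a^{\,r}_{i,j}$ for all $r\in J_{[1,\kappa]}$, $i\in J_{[1,\eta_r]}$, and $j\in\mathbb{N}$, which immediately secures conclusion (i). The boundedness of $W$ on $L^2(\mu)$ then follows from condition \eqref{eqn10}, since that hypothesis is precisely the statement that the relevant Radon--Nikodym-type quantities $h\,F$ governing $\|W\|$ are essentially bounded by $M$; this parallels how \eqref{eqn3} yielded essential boundedness of $h$ in Theorem \ref{Tm1}.

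Next I would invoke Lemma \ref{l2} to reduce the $k$-quasi-$m$-isometry property to two requirements. The first is that $\{\pi_k^2(x^{r}_{i,k+j+1})\,\mu(x^{r}_{i,k+j+1})\}_{j=0}^{\infty}$ is a polynomial in $j$ of degree at most $m-1$ for every branch; by the hypothesis $w_i^{\,r}(j) = \pi_k^2(x^{r}_{i,k+j})\,a^{\,r}_{i,k+j}$ with $\deg w_i^{\,r}\le m-1$ and the defining choice $\mu(x^{r}_{i,j}) = a^{\,r}_{i,j}$, this polynomiality holds by construction on each branch. The second requirement is the circuit-vertex identity $\sum_{p=0}^{m} (-1)^p \binom{m}{p} h_{p+k}F_{p+k}(x_r) = 0$ for all $r\in J_{[1,\kappa]}$. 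Expanding $h_{p+k}F_{p+k}(x_r)$ through the explicit atom formulae for $E_{p}(|\pi_p|^2)$ and $h_p$, this becomes a linear system in the unknowns $\mu(x_1),\dots,\mu(x_\kappa)$ whose coefficient matrix is the same circulant matrix $A$ (with entries $a_p = (-1)^p\binom{m}{p}$ for $0\le p\le m$ and zero otherwise) that appeared in Theorem \ref{Tm1}, since the hypothesis $\kappa > m$ forces the same banded-circulant structure. The inhomogeneous term is now a weighted analogue of $b_r$, built from the already-fixed branch data $a^{\,r}_{i,j}$ and the weight $\pi$.

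The crux of the argument is solvability of this linear system $AX=B$. As in Theorem \ref{Tm1}, the row sums of $A$ vanish because $\sum_{p=0}^{m}(-1)^p\binom{m}{p}=0$, so $A\mathbf{1}=0$ and hence $\operatorname{rank}(A)\le \kappa-1$; the key point to verify is that $\operatorname{rank}(A)=\kappa-1=\operatorname{rank}([A\mid B])$, so that the Rouché--Capelli theorem guarantees a solution. I expect the main obstacle to be confirming the compatibility condition $\operatorname{rank}([A\mid B])=\kappa-1$ in the weighted case: unlike the purely measure-theoretic Theorem \ref{Tm1}, here the entries of $B$ carry the factors $|\pi_{p+k}|^2(x^{s}_{i,j})$, and one must check that these weighted sums still lie in the column space of $A$, equivalently that $B$ is orthogonal to the left null vector of $A$. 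This amounts to a summation identity obtained by summing the defining equations over $r$ and using the periodicity relations for $\Phi_2$; I would discharge it by the same rearrangement argument used for $b_r$ in Theorem \ref{Tm1}, noting that summing the $\kappa$ equations telescopes the weighted branch contributions.

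Finally, for the affine family $\mu_t$, I would observe that the vector $v=[t,t,\dots,t]\in\mathbb{R}^{\kappa}$ satisfies $Av=0$ for every $t$, so whenever $X$ solves $AX=B$ so does $X+v$; choosing $c\in\mathbb{R}$ large enough that $\mu(x_r)+t>0$ for all $r$ and all $t>c$ keeps the perturbed measure positive, and since only the circuit masses $\mu(x_r)$ are shifted while all branch masses $a^{\,r}_{i,j}$ and the weight $\pi$ are unchanged, the boundedness bound \eqref{eqn10} and the branch-polynomiality are preserved, so $\mu_t$ retains both properties (i) and (ii). This is the exact analogue of the concluding paragraph of Theorem \ref{Tm1} and should require no new ideas.
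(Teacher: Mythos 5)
Your branch construction, the boundedness argument via \eqref{eqn10}, and the verification of the first condition of Lemma \ref{l2} are fine and agree with the paper. The genuine gap is exactly at the step you yourself flag as the crux, and it is not a mere verification issue: the reduction to a system $AX=B$ with the \emph{same} banded circulant matrix $A$ as in Theorem \ref{Tm1} is false in the weighted case. When you expand the circuit condition, the denominators in $h_{p+k}(x_r)F_{p+k}(x_r)$ cancel, and after multiplying through by $\mu(x_r)$ row $r$ reads
$$\sum_{p=0}^{m}(-1)^{p}\binom{m}{p}\,\pi^{2}_{p+k}\bigl(x_{\Phi_2(p+k+r)}\bigr)\,\mu\bigl(x_{\Phi_2(p+k+r)}\bigr)=b_r',$$
so each circuit unknown is multiplied by the weight product $\pi^{2}_{p+k}$ evaluated at a circuit vertex, and this factor depends on $p$ (the length of the product along the circuit), not only on the column index. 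The binomial cancellation $\sum_{p}(-1)^{p}\binom{m}{p}=0$ therefore no longer yields $A\mathbf{1}=0$: shifting every circuit mass by $t$ changes row $r$ by $t\sum_{p=0}^{m}(-1)^{p}\binom{m}{p}\pi^{2}_{p+k}(x_{\Phi_2(p+k+r)})$, which is generically nonzero. A one-line counterexample: take $\pi\equiv c$ on the circuit with $c\neq1$; then this quantity equals $t\,c^{2k}(1-c^{2})^{m}\neq0$. Indeed, after the natural substitution $U_v=\bigl(\prod_{j\le v}\pi^{2}(x_{\Phi_2(j)})\bigr)\mu(x_v)$ the coefficient matrix becomes a circulant twisted by $\Pi=\prod_{s=1}^{\kappa}\pi^{2}(x_s)$, which is \emph{invertible} whenever $\Pi\neq1$. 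Consequently your rank-$(\kappa-1)$ claim, the Rouch\'e--Capelli compatibility check, and the null-vector argument for the family $\mu_t$ all collapse unless $\Pi=1$ (e.g.\ the unweighted case of Theorem \ref{Tm1}); in particular the compatibility condition cannot be ``discharged by the same rearrangement argument,'' because the weighted branch and circuit contributions no longer telescope under summation over $r$.

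For comparison, the paper's own proof does not take your route at all: it never sets up a linear system for the circuit masses. After checking the branch condition, it sets $y_j=h_{j+k}F_{j+k}(x_r)$, encodes the circuit condition as a homogeneous Toeplitz system whose kernel consists of polynomial sequences of degree at most $m-1$, and then simply asserts that this boundary condition is satisfied and that adding $t$ to the root masses ``preserves the finite-difference condition'' --- that is, it leaves unaddressed precisely the construction of $\mu(x_r)$ that you correctly identify as the main obstacle. So your proposal is a different and more explicit strategy than the paper's, but the step on which it pivots fails in the weighted setting as written; any repair must track the $\Pi$-twisted circulant structure, and when $\Pi\neq1$ it must confront both the positivity of the (then unique) circuit masses and the resulting tension with the claimed one-parameter family $\mu_t$.
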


\begin{proof}
	Let $\pi\in L^{\infty}(\mu)$ and let $\phi$ be a nonsingular measurable
	function satisfies (\ref{equ1}).  
	For  $p\in\mathbb{N}$, recall the definition
	$$
	\pi_p^{2}(x)=\pi^{2}(x)(\pi\circ\phi)^{2}(x)\cdots (\pi\circ\phi^{p-1})^{2}(x).
	$$
	If $p\ge \kappa$, the conditional expectation $E_p(|\pi_p|^{2})$ is defined as follows
	$$
	E_p(|\pi_p|^{2}) = F_p\circ\phi^{p},
	$$
	where $F_p$ is a $ \mathcal{F}$- measurable function defined by
	$$
	F_p(x)=
	\begin{cases}
		K^{r}_{i,j+p}, & x=x^{r}_{i,j},\ r\in  J_{[1, \kappa]}, i\in  J_{[1, \eta_{r}]},\
		j\in\mathbb{N},\\[4pt]
		K^{r}_{p},     & x=x_r,\ r\in J_{[1, \kappa]}.
	\end{cases}
	$$

	Define the measure on $\mathcal{F}$ by
	$$
	\mu(x^{r}_{i,j})=a^{r}_{i,j},\qquad 
	r\in  J_{[1, \kappa]}, i\in  J_{[1, \eta_{r}]},\ j\in\mathbb{N}.
	$$
	
	Observe that (\ref{eqn10}) ensures the boundedness of $W$. Moreover, the sequence\\
	  $\{ \pi_{k}^{2}(x^{r}_{i,k+j+1})\mu(x^{r}_{i,k+j+1})\}_{j=0}^\infty$ is a polynomial in $j$ of degree atmost $m-1$. Consequently, the first condition in Lemma~\ref{l2} is satisfied.
	
		Next we check the  boundary condition at the atoms  $x_{r}, r\in J_{[1, \kappa]}$ as follows:
	
	Fix  $r \in J_{[1, \kappa]}$ and define
	$$
	y_{j}:=h_{j+k}F_{j+k}(x_{r}), \qquad j\ge 0.
	$$
	Then the condition of Lemma \ref{l2} becomes
	\begin{equation}\label{eqn11}
		\sum_{p=0}^{m}(-1)^{p}\binom{m}{p}\, y_{p+k}=0	.
	\end{equation}
	
	\noindent
	Consider the block
	$$
	\mathbf{Y}^{(k)}=(y_{k},y_{k+1},\dots,y_{k+m})^{T}.
	$$
	Then \eqref{eqn11} for \(k,k+1,\dots,k+m\) gives the homogeneous system
	$$
	A_{m}\,\mathbf{Y}^{(k)}=\mathbf{0},
	$$
	where $A_{m}$ is the Toeplitz matrix containing the binomial coefficients of 
	\eqref{eqn11}.
	
	\medskip
	\noindent
Then the nullspace of $A_{m}$ consists precisely of vectors obtained from the values of polynomials of degree at most $m-1$.  
	Consequently,$\{y_j\}_{j=0}^\infty$  must be a polynomial sequence of degree $\le m-1$, which is exactly equivalent to the boundary condition
	$$
	\sum_{p=0}^{m}(-1)^{p}\binom{m}{p}\, h_{p+k}F_{p+k}(x_{r})=0.
	$$
	Since both conditions in Lemma \ref{l2} are satisfied, 
	implies that the weighted composition operator $W$ is a $k$-quasi-$m$-isometry
	on $L^{2}(\mu)$.
	
Next consider the perturbation of the measure as follows; \\
\noindent
	Let $c\in\mathbb{R}$ and $t>c$ such that $\mu(x)+t> 0$ .
	Define
	$$
	\mu_t(x)=
	\begin{cases}
		\mu(x)+t, & x=x_r,\ r\in J_{[1, \kappa]},\\[3pt]
		\mu(x), & \text{otherwise}.
	\end{cases}
	$$
	Since the translation of the measure at the root atoms doesnot alter the sequence condition for 
	$\{\pi_k^{2}(x^{r}_{i,k+j+1}) \mu(x^{r}_{i,k+j+1})\}_{j\ge0}$ and preserves the
	finite-difference condition at each $x_r$,
	Lemma \ref{l2} remain valid for $\mu_t$.  
	Hence $W$ is also a $k$-quasi-$m$-isometry on $L^{2}(\mu_t).$
	
\end{proof}

\medskip
\begin{theorem}
	Let $m\in\mathbb N$, $k\in\mathbb Z^+$ with $m\ge 2$, and let $\{b_n\}_{n=1}^m\subset(0,\infty)$.  
	Let $\pi:X\to\mathbb C$ be a positive bounded measurable function. Then there exists a discrete measure space $(X,\mathcal A,\mu)$ and a measurable map $\phi:X\to X$ satisfying condition \eqref{equ1} with $\kappa=1$ and $\eta=1$, such that
	$
	\mu(x^1_{1,n})=b_n,\quad n\in J_{[1, m]},
	$
	and the associated weighted composition operator $W$ induced by $\phi$ and $\pi$
	is a $k$-quasi-$(m+1)$-isometry.  
\end{theorem}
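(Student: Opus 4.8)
The plan is to reduce, exactly as in the composition-operator case, to verifying the two conditions of Lemma~\ref{l2} for isometry order $m+1$ on the single-circuit graph with $\kappa=\eta=1$. I would take $X=\{x_1\}\cup\{x^1_{1,j}:j\in\mathbb{N}\}$, $\mathcal{A}=P(X)$, and let $\phi(x)=\mathrm{par}(x)$ be the parent map of \eqref{equ1}, so that $\phi(x_1)=x_1$, $\phi(x^1_{1,j+1})=x^1_{1,j}$ and $\phi(x^1_{1,1})=x_1$. Since $\pi>0$ is bounded, each factor $\pi_k^2(x^1_{1,p})=\prod_{l=0}^{k-1}\pi^2(\phi^l(x^1_{1,p}))$ is a fixed positive number, so prescribing the $\pi_k^2$-weighted values along the ray is the same, after dividing out these known factors, as prescribing $\mu$ itself.

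For the polynomial condition I would first produce, via Lemma~\ref{l1}, a polynomial $w$ of degree $m$ that is positive at every nonnegative integer and interpolates the finitely many values forced by the prescribed data, namely $w(n-k-1)=\pi_k^2(x^1_{1,n})\,b_n$ for those $n\in J_{[1,m]}$ with $n\ge k+1$ (the remaining interpolation values being assigned freely so that Lemma~\ref{l1} applies and also supplies a free parameter $t$). I then set $\mu(x^1_{1,k+j+1})=w(j)/\pi_k^2(x^1_{1,k+j+1})$ for $j\ge 0$, put $\mu(x^1_{1,n})=b_n$ at the remaining low indices $n\le k$, and leave $\mu(x_1)>0$ undetermined. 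One checks that the relevant Radon--Nikodym weight is essentially bounded, so $W\in B(L^2(\mu))$ (the analogue of \eqref{eqn10}), and by construction $\{\pi_k^2(x^1_{1,k+j+1})\mu(x^1_{1,k+j+1})\}_{j\ge 0}=\{w(j)\}_{j\ge 0}$ is a polynomial in $j$ of degree at most $m$, which is precisely the first requirement of Lemma~\ref{l2} at order $m+1$.

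The main obstacle is the boundary identity at the circuit vertex, $\sum_{p=0}^{m+1}(-1)^p\binom{m+1}{p}h_{p+k}F_{p+k}(x_1)=0$, which, in contrast to the unweighted case, is \emph{not} forced by the polynomial condition. Writing $a=\pi^2(x_1)$ and $\Pi_j=\prod_{i=1}^{j}\pi^2(x^1_{1,i})$, the $\kappa=1$ formulas for $h_p(x_1)$ and $F_p(x_1)$ collapse, after cancelling the common factor $\mu(x_1)+\sum_{j=1}^{p}\mu(x^1_{1,j})$, to $h_p(x_1)F_p(x_1)=G(p)/\mu(x_1)$, where $G(P)=a^P\mu(x_1)+\sum_{j=1}^{P}\Pi_j\,a^{P-j}\mu(x^1_{1,j})$. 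Hence the identity becomes the single linear equation $\sum_{p=0}^{m+1}(-1)^p\binom{m+1}{p}G(k+p)=0$. Because the $\pi$-weighting ties the root value $a$ to the ray, $G$ is not a polynomial in $P$, so this is a genuine extra constraint rather than an automatic consequence of the first condition.

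To close the argument I would exploit the freedom remaining in the construction. Separating the $\mu(x_1)$-terms shows they contribute $\mu(x_1)\,a^k(1-a)^{m+1}$, while the rest contribute a quantity $D(w)$ depending affinely on the values of $w$. If $a\ne 1$, the coefficient $a^k(1-a)^{m+1}$ is nonzero, so the equation forces $\mu(x_1)=-D(w)/\bigl(a^k(1-a)^{m+1}\bigr)$; since $w$ may still be varied within an open set of degree-$m$ polynomials positive on the nonnegative integers and matching the forced nodes, and $w\mapsto D(w)$ is affine and non-constant (the value $w(m)$ enters $G(k+m+1)$ with nonzero coefficient), the forced $\mu(x_1)$ is a non-constant affine function of these parameters and hence is positive on a nonempty open subset, on which $w>0$ as well. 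If $a=1$, the $\mu(x_1)$-terms cancel and the equation reduces to the single affine condition $D(w)=0$; here I would fix any $\mu(x_1)>0$ and solve it using one of the at least $k+1$ coefficients of $w$ not pinned by the interpolation data, the positivity of $w$ being an open condition. In either case both hypotheses of Lemma~\ref{l2} hold at order $m+1$, so the resulting $W$ is a $k$-quasi-$(m+1)$-isometry with $\mu(x^1_{1,n})=b_n$ for $n\in J_{[1,m]}$, as required.
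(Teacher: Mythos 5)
Your construction is, up to indexing, the same skeleton as the paper's proof: the same space $X=\{x_1\}\cup\{x^1_{1,j}:j\in\mathbb N\}$ with the parent map of \eqref{equ1}, the same use of Lemma~\ref{l1} to produce a degree-$m$ polynomial interpolating the forced values $\pi_k^2(x^1_{1,k+n})b_{k+n}$, and the same appeal to Lemma~\ref{l2}. Where you diverge is at the circuit vertex, and your instinct there is correct: the paper's proof verifies only the polynomial condition of Lemma~\ref{l2}, assigns ``an arbitrary positive mass'' to $x_1$, and then simply cites the lemma, never checking its second condition $\sum_{p=0}^{m+1}(-1)^p\binom{m+1}{p}h_{p+k}F_{p+k}(x_1)=0$. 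Your computation that $h_pF_p(x_1)=G(p)/\mu(x_1)$ with $G(P)=a^P\mu(x_1)+\sum_{j=1}^{P}\Pi_j a^{P-j}\mu(x^1_{1,j})$ and $a=\pi^2(x_1)$ follows correctly from the paper's displayed formulas for $h_p$ and $F_p$ with $\kappa=1$, and it shows that, in contrast to the unweighted composition case (where the circuit identity is a consequence of the polynomial condition, which is why the earlier theorem could invoke \cite[Corollary 2.7]{DSP}), here the identity is a genuine extra linear constraint. On this point your attempt is strictly more complete than the paper's own argument, which is silent on exactly the step you labour over.

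That said, your closing positivity argument has a genuine gap. From ``$w\mapsto D(w)$ is affine and non-constant'' you infer that the forced value $\mu(x_1)=-D(w)/\bigl(a^k(1-a)^{m+1}\bigr)$ ``is positive on a nonempty open subset,'' but a non-constant affine function on an open set need not take positive values anywhere; positivity requires an admissible direction, unbounded within the constraint set, along which the affine function has the right sign. The one direction Lemma~\ref{l1} actually supplies is $w(m)=t\to\infty$, and there the coefficient of $t$ in $D$ is $(-1)^{m+1}\Pi_{m+1}$, so the forced $\mu(x_1)$ tends to $\pm\infty$ with sign $(-1)^m\,\mathrm{sign}\bigl((1-a)^{m+1}\bigr)$ --- this depends on the parity of $m$ and on whether $a<1$ or $a>1$, and is negative in half the cases. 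To repair this you would need, for each configuration of $a$, $m$, $k$, an unbounded admissible direction of the correct sign, and perturbing an interior node value (your fallback) is not covered by Lemma~\ref{l1}: it changes the whole polynomial, so positivity of $w$ at the infinitely many integers beyond the nodes must be re-established. The same issue affects your $a=1$ branch, where solving $D(w)=0$ by moving one coefficient can destroy positivity of the tail. Nor is this a formality: if one computes the ray condition directly from the paper's formulas ($h_pF_p(x^1_{1,j})=\Pi_{j+p}\mu_{j+p}/(\Pi_j\mu_j)$, so that $\Pi_n\mu(x^1_{1,n})$, rather than $\pi_k^2(x^1_{1,n})\mu(x^1_{1,n})$, must be eventually polynomial --- note this does not match Lemma~\ref{l2} as transcribed), then for $a>1$ the circuit identity forces $G(k)$ to equal $-\sum_{i\ge1}a^{-i}\Pi_{k+i}\mu(x^1_{1,k+i})<0$, which is impossible since $G(k)>0$; so the sign analysis you skipped is precisely where the construction can fail, and a correct proof must either carry it out case by case or restrict $a=\pi^2(x_1)$.
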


\begin{proof}

	Let $\kappa=1$, $\eta=1$, and set
	$$
	X=\{x_1\}\cup\{x^1_{1,j}:j\in\mathbb N\},\qquad \mathcal A=\mathcal P(X).
	$$
	Define $\phi:X\to X$ by  $\phi(x^1_{1,j})=x^1_{1,j-1}$ for $j\ge2$, $\phi(x^1_{1,1})=x_1$ and $\phi(x_1)=x_1$; these choices satisfy \eqref{eqn1} with $\kappa=1,\eta=1$.
	
	Extend the given finite sequence $\{b_n\}_{n=1}^m$ by choosing positive numbers $\{b_n\}_{n=m+1}^{\infty}$ so that there exists a real polynomial $w\in\mathbb R[x]$ of degree $m$ satisfying
	$$
	w(n)=\pi_k^{2}(x^{1}_{1,k+n})b_{k+n}\quad\text{for }n\in J_{[1,\infty)}.
	$$
    Since $\pi$ is positive and bounded, it is nowhere negative, and in particular it is nonzero on every point where we will define $\mu$. Define the atomic measure $\mu$ by
$$
\mu(x^1_{1,n})= b_n, \quad \text{for }
n\in \mathbb{N}$$
	and assign an arbitrary positive mass to the remaining finitely many atoms (e.g.\ to $x_1$) so that $\mu$ is a positive measure on $X$.

Then by Lemma \ref{l2}, $W$ is a $k$-quasi-$(m+1)$-isometry completion of the initial weight $\{b_n\}_{n=1}^m\subset(0,\infty)$.
	
	\end{proof}
\begin{remark}
	
	Moreover, when $m=1$ the same construction yields a $k$-quasi-$2$-isometric weighted composition operator. In this case choose $w$ to be a degree-$1$ polynomial (or a positive constant). So, by Lemma~\ref{l2} the operator $W$ is a $k$-quasi-$2$-isometry.
	
\end{remark}


\end{document}